\newtheorem{theorem}{Theorem}[section]
\newtheorem{conjecture}[theorem]{Conjecture}
\newtheorem{corollary}[theorem] {Corollary}
\newtheorem{proposition}[theorem]{Proposition}
\newtheorem{remark}[theorem]{Remark}
\newcommand{\Q}{{\mathbb Q}}
\newcommand{\N}{{\mathbb N}}
\newcommand{\C}{{\mathbb C}}
\begin{document}
  
  \label{'ubf'}  
\setcounter{page}{1}                                 

\markboth {\hspace*{-9mm} \centerline{\footnotesize \sc
DERIVATIVES OF $L$-FUNCTIONS }
                 }
                { \centerline                           {\footnotesize \sc  
         T. Chatterjee \& S. Dhillon                                                } \hspace*{-9mm}              
               }

\vspace*{-2cm}

\begin{center}
{ 
       { \textbf { SPECIAL VALUES OF DERIVATIVES OF CERTAIN $L$-FUNCTIONS
                               }
       }
\\

\medskip
{\sc Tapas Chatterjee\footnote{Research of the first author is partly supported by the core research grant CRG/2023/000804 of the Science and Engineering Research
Board of DST, Government of India.} }\\
{\footnotesize  Department of Mathematics,}\\
{\footnotesize Indian Institute of Technology Ropar, Punjab, India.
}\\
{\footnotesize e-mail: {\it tapasc@iitrpr.ac.in}}
\medskip

{\sc Sonika Dhillon\footnote{Research of the second author is partly supported by a post-Doctoral
fellowship at Indian Institute of Technology Delhi.} }\\
{\footnotesize Department of Mathematics, }\\
{\footnotesize Indian Institute of Technology Delhi 
}\\
{\footnotesize e-mail: {\it sonika05@maths.iitd.ac.in}}
\medskip
}
\end{center}

\thispagestyle{empty} 
\vspace{-.4cm}

\hrulefill

\begin{abstract}  
{\footnotesize 
In this paper we address the question of non-vanishing of $L'(0,f)$ where $f$ is an algebraic valued periodic function. In 2011, Gun, Murty and Rath studied the nature of special values of the derivatives of even Dirichlet-type functions   and proved that it can be either zero or transcendental. Here for some special cases we characterize the set of functions for which $L'(0,f)$ is zero or transcendental. Using a theorem of Ramachandra about multiplicative independence of cyclotomic units we also provide some non-trivial examples of functions where $L'(0,f)$ is zero. Finally, assuming Schanuel's conjecture we derive the algebraic independence  of special values of derivatives of $L$-functions.

}
 \end{abstract}
 \hrulefill
 
\noindent 
{\small \textbf{2010 Mathematics Subject Classification.} Primary 11J81, 11J86, 11M06; secondary 11J91}.\\
{\small \textbf{Key words and phrases}: Baker's theory, Derivatives of $L$-functions, Gamma function, Linear forms in logarithms,  Ramachandra units,  Weak Schanuel's conjecture}.


\vspace{-.37cm}

\section{Introduction}
For any periodic arithmetic function $f,$ the  Dirichlet series associated to the function $f$ is defined as 
 \begin{align*}
 L(s,f)=\sum_{n=1}^\infty \frac{f(n)}{n^s}
 \end{align*}
 for $\Re(s)>1$. 
 Suppose $f$ is a periodic function with period $q,$ then the above Dirichlet series can be re-written as
  \begin{align}\label{8eq5}
  L(s,f)=\sum_{n=1}^\infty \frac{f(n)}{n^s}= q^{-s}\sum_{a=1}^qf(a) \zeta\left(s, \frac{a}{q}\right),
  \end{align}
where $\zeta(s,a)$ is the Hurwitz zeta function  defined as 
\begin{align*}
\zeta(s,a)=\sum_{n=0}^\infty \frac{1}{(n+a)^s}
\end{align*}
for $0<a \leq 1$ and $s \in \C$ with $\Re{(s)}>1$.  Since  the Hurwitz zeta function admits an analytic
 continuation to the whole complex plane with a simple pole at $s=1$ and residue 1, one can extend the 
 Dirichlet series to all values of $s \in \C$ possibly at $s=1$ also provided $\sum_{a=1}^\infty f(a)=0$. 
 
 The study of the special values of the $L$-functions has been the focus for a very long  time and a 
 series of interesting results are known in this direction. In fact the motivation for studying certain number
  theoretic constants such as digamma function at rational arguments, generalized Stieltjes constants comes
   from the desire to understand the arithmetic nature of special values of $L$-functions and the derivatives of $L$-functions. 
 For the case when $s=1$ in Eq. \eqref{8eq5} a lot of development has been done about the arithmetic
  nature of the special values of $L$-functions attached to  a variety of arithmetic functions by many authors.
 In fact  it was shown \cite{DHL} that for
a $q$ periodic arithmetic function $f$ satisfying $\sum_{n=1}^q f(n)=0,$
the Dirichlet series
$L(s, f)$ converges at $s = 1$ and is given by the following closed form expression
 \begin{align*}
 L(1,f)= \sum_{a=1}^qf(a) \gamma(a,q)
\end{align*}
 where the constants $\gamma(a,q)$ are defined as
 \begin{align*}
\gamma(a,q)=\lim\limits_{x \to \infty}\left(\sum\limits_{\substack{0<n\leq x \\ n \equiv a \text{ mod } q}}\frac{1}{n}-\frac{1}{q}\log x\right)
\end{align*}
for $ 1 \leq a \leq q.$
 The work related to the arithmetic nature and linear independence of these
 constants $\gamma(a,q)$ has been done by Murty and Saradha \cite{RM}.

In this article, we investigate the special values of the derivatives of $L$-functions mainly at the points $s=0$.
 Now we will see an interesting relation between the derivatives of $L$-functions at $s=0$ and the logarithms of gamma values with the help
 of the following identities due to Lerch \cite{ML}:
 $$ \zeta(0,a/q) =\frac{1}{2}-\frac{a}{q} \ \ \ \text{ and } \ \ \  \zeta'(0,a/q)= \log \Gamma(a/q)- \frac{1}{2} \log 2 \pi.$$ 
  Taking the derivative on both sides of Eq. \eqref{8eq5}, we get
  \begin{equation}\label{8eq6}L'(s,f)=-\frac{\log q}{q^{s}} \sum_{a=1}^q f(a) \zeta(s, a/q) + q^{-s} \sum_{a=1}^q f(a) \zeta'(s, a/q).
  \end{equation}
 At $s=0,$ using the Lerch identities (see \cite{ML}),
    Eq. \eqref{8eq6}   reduces to
    \begin{align}\label{8eq7}
  L'(0,f)=-\log q \sum_{a=1}^q f(a) (1/2-a/q)+ \sum_{a=1}^q f(a)  \log \left(\Gamma (a/q)\right)- \frac{1}{2} \log (2 \pi) \sum_{a=1}^q f(a).
  \end{align}
  Assume $f$ to be an even Dirichlet-type periodic  function, then Eq. \eqref{8eq7} can be written as
  \begin{align}\label{8eq8}
   L'(0,f)= \sum_{\substack{a=1,\\ (a, q )=1  }}^{q/2} f(a)
     \left(\log \left(\Gamma (a/q)+\log(1-a/q\right)\right)- \log (2 \pi)\sum_{\substack{a=1,\\ (a, q )=1  }}^{q/2} f(a).
  \end{align}
Now by  using the following  reflection formula  for gamma function
\begin{align}\label{8ss1}
\Gamma(z)\Gamma(1-z)=\frac{\pi}{\sin (\pi z)}
\end{align}
  Eq. \eqref{8eq8} reduces to
  \begin{align}\label{8eq9}
  L'(0,f)= -\sum_{\substack{a=1,\\ (a, q )=1  }}^{q/2} f(a)  \log 2 \sin \left(\frac{a\pi}{q}\right).
  \end{align}

  Thus, the linear independence of logarithm of sine function at rational arguments is very closely related to
    the non-vanishing of special values of derivatives of $L$-functions. 
  In 2011, Gun, Murty and Rath \cite{GRP3} studied the non-vanishing of special values of derivatives of $L$-functions and proved the following theorem:
  \begin{theorem}(Gun, Murty and Rath).
  If $f$ is an even Dirichlet-type periodic function that takes algebraic values, then $L'(0, f)$ is either zero or transcendental.
  \end{theorem}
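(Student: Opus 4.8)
The plan is to read off from the closed form \eqref{8eq9} that $L'(0,f)$ is a linear combination, with algebraic coefficients, of logarithms of nonzero algebraic numbers, and then to quote Baker's theorem on linear forms in logarithms in its sharp qualitative shape: if $\alpha_1,\dots,\alpha_m$ are nonzero algebraic numbers and $\beta_1,\dots,\beta_m$ are algebraic numbers, then $\beta_1\log\alpha_1+\cdots+\beta_m\log\alpha_m$ is either $0$ or transcendental. (This is the standard corollary obtained by first reducing to the case in which the logarithms are $\Q$-linearly independent and then applying the linear independence of $1,\log\alpha_1,\dots,\log\alpha_m$ over $\overline{\Q}$.) Applied with the index $a$ running over $1\le a\le q/2$, $(a,q)=1$, with $\alpha_a=2\sin(a\pi/q)$ and $\beta_a=-f(a)$, it gives the theorem immediately.

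Two points have to be checked. First, the coefficients $\beta_a=-f(a)$ are algebraic, which is exactly the hypothesis that $f$ takes algebraic values. Second, each $\alpha_a=2\sin(a\pi/q)$ is a nonzero algebraic number: writing $\zeta=e^{i\pi/q}$, a primitive $2q$-th root of unity, one has
\[
2\sin(a\pi/q)=\frac{\zeta^{a}-\zeta^{-a}}{i}=-i\,\zeta^{-a}\bigl(\zeta^{2a}-1\bigr)\in\Q(\zeta),
\]
so $2\sin(a\pi/q)$ is algebraic, and it is nonzero because $0<a/q\le 1/2$ forces $\sin(a\pi/q)>0$. (Up to a root of unity these are precisely the cyclotomic units the paper later exploits via Ramachandra's theorem.) Note also that every logarithm occurring is the ordinary real logarithm of a positive real, so there is no branch ambiguity.

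With these observations, \eqref{8eq9} exhibits
\[
L'(0,f)=-\sum_{\substack{a=1\\(a,q)=1}}^{q/2} f(a)\,\log\bigl(2\sin(a\pi/q)\bigr)
\]
as exactly a linear form in logarithms of algebraic numbers with algebraic coefficients, and Baker's theorem forces $L'(0,f)$ to be $0$ or transcendental.

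The substantive work lies entirely in the reduction to \eqref{8eq9} — the Lerch identities, the evenness of $f$ (which kills the $\log q$ term and, together with the Dirichlet-type support hypothesis, folds the sum down to $1\le a\le q/2$ with $(a,q)=1$), and the reflection formula \eqref{8ss1} — and this reduction is already carried out in the excerpt. The only place afterwards that requires care is to invoke Baker's theorem in precisely the form that also excludes a nonzero \emph{algebraic} value: a priori one might worry that cancellation among the $\log(2\sin(a\pi/q))$ yields an algebraic irrational, and it is exactly this possibility that Baker's theorem removes. I do not expect any genuine obstacle beyond citing the correct statement.
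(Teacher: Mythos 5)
Your proposal is correct and follows exactly the route the paper itself sets up: the reduction of $L'(0,f)$ to the form \eqref{8eq9} via the Lerch identities, evenness, and the reflection formula \eqref{8ss1}, followed by the qualitative corollary of Baker's theorem (Proposition \ref{prop1}) applied to the algebraic numbers $2\sin(a\pi/q)$ with algebraic coefficients $-f(a)$. This is the same mechanism the paper relies on throughout (e.g.\ in the proofs of Theorems \ref{THM1} and \ref{THM2}), so there is nothing to add beyond your correct observation that the $\Q$-linear-dependence reduction is needed before quoting Baker.
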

Note that the above theorem  due to Gun, Murty and Rath also holds for the case when $f$ is an even periodic function that assumes algebraic values.
  Now, we will characterize the set of functions where $L'(0,f)$ is zero or transcendental and will prove the following result:
  \begin{theorem}\label{THM1}
  Let $q\neq p^n$ be any positive integer and $q$ satisfies one of the conditions given in Proposition \ref{prop3} (section 2). Let $f$ be any even Dirichlet type algebraic valued function with period $q$. Then $L'(0,f)$ is zero if and only if 
  
  $$f(a)= \left\{ \begin{array}{rcl}
\hspace{-.3cm} c  & \mbox{ \ $(a,q) =1,$ \ \ \ $1 \leq a \leq q/2$}
 \\  \\
0& \mbox{otherwise}
\end{array}\right.$$
for some algebraic number $c$.
  \end{theorem}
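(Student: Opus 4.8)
\medskip
\noindent\textbf{Proof proposal.}
The plan is to extract everything from identity \eqref{8eq9}, which reads
$L'(0,f)=-\sum_{(a,q)=1,\,1\le a\le q/2}f(a)\,\log 2\sin(a\pi/q)$,
and to argue the two implications separately. I would first settle the easy ``if'' direction. If $f$ has the stated shape with constant value $c$, then \eqref{8eq9} collapses to $-c\sum_{(a,q)=1,\,1\le a\le q/2}\log 2\sin(a\pi/q)$, so it is enough that this sum vanish. Writing $\zeta_q=e^{2\pi i/q}$ and using $|1-\zeta_q^{a}|=2\sin(a\pi/q)$ for $1\le a\le q-1$, one gets $\prod_{(a,q)=1,\,1\le a\le q-1}2\sin(a\pi/q)=|\Phi_q(1)|$, which equals $1$ precisely because $q$ is not a prime power; the symmetry $\sin(a\pi/q)=\sin((q-a)\pi/q)$ (and the fact that $a=q/2$ is never coprime to $q$ for $q>2$) shows this is the square of the product over $a\le q/2$, which is therefore $1$ and has logarithm $0$.

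For the ``only if'' direction, assume $L'(0,f)=0$, i.e. $\sum_{(a,q)=1,\,1\le a\le q/2}f(a)\log 2\sin(a\pi/q)=0$ with all $f(a)$ algebraic. Since each $2\sin(a\pi/q)$ is algebraic, I would invoke Baker's theorem in the following shape: a maximal $\Q$-linearly independent subset of $\{\log 2\sin(a\pi/q)\}$ is automatically linearly independent over $\overline{\Q}$, and the remaining logarithms are $\Q$-combinations of it; hence the $\overline{\Q}$-vector space of linear relations among these logarithms has the same dimension as the $\Q$-vector space of such relations. Consequently it suffices to show that the $\Z$-module $M\subseteq\Z^{\varphi(q)/2}$ of integer relations $\prod_{(a,q)=1,\,1\le a\le q/2}\bigl(2\sin(a\pi/q)\bigr)^{n_a}=1$ equals $\Z\cdot(1,\dots,1)$.

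This is exactly where Proposition \ref{prop3} --- and, underneath it, Ramachandra's theorem on the multiplicative independence of cyclotomic units --- is used. Because $q$ is not a prime power, each $1-\zeta_q^{a}$ with $(a,q)=1$ is a cyclotomic unit of $\Q(\zeta_q)$, so $\bigl(2\sin(a\pi/q)\bigr)^2=(1-\zeta_q^{a})(1-\zeta_q^{-a})$ is a cyclotomic unit of the real field $\Q(\zeta_q)^+$, and a relation in $M$ squares to a multiplicative relation among these units. For $q$ satisfying one of the conditions of Proposition \ref{prop3}, any $\varphi(q)/2-1$ of the numbers $2\sin(a\pi/q)$ are multiplicatively independent; a brief linear-algebra argument then forces every nonzero element of $M$ to have all of its coordinates nonzero, whence $\operatorname{rank}M\le 1$. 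Since $M$ already contains the primitive vector $(1,\dots,1)$ produced in the ``if'' direction, $M=\Z\cdot(1,\dots,1)$. Transporting this back through Baker's theorem, the $\overline{\Q}$-relation space is $\overline{\Q}\cdot(1,\dots,1)$, so $f(a)=c$ for all coprime $a\le q/2$ and some algebraic $c$; as $f$ is even of Dirichlet type, this determines $f$ completely and gives precisely the asserted form.

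The real obstacle is the structural input of Proposition \ref{prop3}: showing that the relevant cyclotomic units generate a subgroup of full rank $\varphi(q)/2-1$ in the unit group of $\Q(\zeta_q)^+$ (equivalently, the strong multiplicative independence stated above) is where Ramachandra's theorem and the arithmetic case analysis on $q$ cannot be avoided, and it is also the reason $q\ne p^n$ is imposed throughout --- both so that $1-\zeta_q^{a}$ is a genuine unit and so that $\Phi_q(1)=1$, which is what turns the constant function into an actual relation. Everything else, the product evaluation and the descent through Baker's theorem, is routine.
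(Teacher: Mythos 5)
Your proposal is correct in substance and reaches the theorem, but it sources the key independence input differently from the paper, so it is worth comparing. The paper, after deriving \eqref{8eq9}, uses the sine identity \eqref{8eq20} to rewrite $L'(0,f)=\sum_{a\ge 2,(a,q)=1}(f(a)-f(1))\log 2\sin(a\pi/q)$ and then simply quotes Proposition \ref{prop4} (from the authors' earlier work), which already asserts the $\overline{\Q}$-linear independence of the numbers $\log 2\sin(a\pi/q)$, $2\le a\le q/2$, under the stated hypotheses on $q$; constancy of $f$ on the coprime classes follows in one line. You instead keep all coefficients $f(a)$, reduce $\overline{\Q}$-linear relations among the logarithms to the $\Z$-module $M$ of multiplicative relations via Baker (that reduction is fine), and then try to pin down $M$ using Proposition \ref{prop3} directly. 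This buys a more self-contained argument that also sidesteps the $\pi$, $\log 2$ and the exclusion $a/q\neq\frac{1}{\pi}\sin^{-1}(2^{-\alpha})$ clutter in Proposition \ref{prop4}; the cost is that your central claim --- that any $\phi(q)/2-1$ of the numbers $2\sin(a\pi/q)$ are multiplicatively independent --- does not follow from Proposition \ref{prop3} by the squaring remark alone, since Pei--Feng's system consists of the units $(1-\zeta_q^h)/(1-\zeta_q)$ normalized by $1-\zeta_q$. To make it work you must write $|1-\zeta_q^a|=|1-\zeta_q|\cdot|(1-\zeta_q^a)/(1-\zeta_q)|$, eliminate $|1-\zeta_q|$ by raising to the power $\phi(q)/2$ and using the full product relation $\prod_{(a,q)=1}|1-\zeta_q^a|=1$, and pass from absolute values back to the units via $\sigma_{-1}\bigl((1-\zeta_q^a)/(1-\zeta_q)\bigr)=\zeta_q^{1-a}(1-\zeta_q^a)/(1-\zeta_q)$, which converts a relation among the $|\cdot|$ into one among the units up to a root of unity (killed by raising to the $q$-th power). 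With that transfer supplied, your rank argument (a nonzero relation with a vanishing coordinate would contradict the independence of the remaining $\phi(q)/2-1$ numbers, hence $\operatorname{rank}M\le 1$ and $M=\Z\cdot(1,\dots,1)$) is sound, and your ``if'' direction via $\Phi_q(1)=1$ is exactly the paper's identity \eqref{8eq20}. In effect you are re-proving the relevant case of Proposition \ref{prop4} inline rather than citing it, which is legitimate but should be written out rather than waved at.
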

  
  \begin{corollary}
  Let $q$ be any positive integer as defined in Theorem \ref{THM1}. Then $L'(0,f)$ is transcendental if and only if $f$ is a non constant Dirichlet type even function.
  \end{corollary}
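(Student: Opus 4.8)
The corollary is a direct consequence of Theorem~\ref{THM1} together with the theorem of Gun, Murty and Rath, so the plan is essentially to chain two equivalences. The key point to isolate first is a dictionary: the exceptional shape of $f$ appearing in Theorem~\ref{THM1} --- the function taking a fixed algebraic value $c$ on the residues coprime to $q$ lying in $[1,q/2]$ (and hence, by evenness, on all residues coprime to $q$) and vanishing elsewhere --- is exactly what is meant by a \emph{constant} Dirichlet-type even function of period $q$; the value $c=0$ is permitted and corresponds to $f\equiv 0$, which is constant.

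With that dictionary in place, I would argue as follows. For the forward implication, suppose $f$ is a non-constant algebraic-valued even Dirichlet-type function of period $q$. Then $f$ is not of the exceptional shape above, so Theorem~\ref{THM1} gives $L'(0,f)\neq 0$; since the Gun--Murty--Rath theorem guarantees that $L'(0,f)$ is either $0$ or transcendental, it must be transcendental. For the converse, if $L'(0,f)$ is transcendental then in particular $L'(0,f)\neq 0$, so by the contrapositive of Theorem~\ref{THM1} the function $f$ cannot be of the exceptional shape, i.e. $f$ is non-constant. Combining the two directions yields the stated equivalence. Note that the forward direction is the only one that invokes Gun--Murty--Rath (to upgrade "nonzero" to "transcendental"); the converse needs only Theorem~\ref{THM1}.

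There is no genuine obstacle here: all the analytic and transcendence input has already been spent in proving Theorem~\ref{THM1} and in the cited Gun--Murty--Rath result, so this is a short deduction. The only things to be careful about are terminological and hypothesis-tracking --- making sure "constant Dirichlet-type even function" is read to include the zero function, so that the exceptional set of Theorem~\ref{THM1} and the set of constant functions literally coincide, and carrying over verbatim the standing conditions on $q$ (that $q\neq p^{n}$ and that $q$ satisfies one of the conditions of Proposition~\ref{prop3}).
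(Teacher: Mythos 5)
Your proposal is correct and matches the paper's intent: the corollary is stated without a separate proof precisely because it follows immediately from Theorem~\ref{THM1} (characterizing when $L'(0,f)=0$) combined with the Gun--Murty--Rath dichotomy that $L'(0,f)$ is either zero or transcendental, which is exactly the chain you give. Your care about reading the exceptional shape in Theorem~\ref{THM1} as the ``constant'' Dirichlet-type even functions (including $f\equiv 0$) and carrying over the algebraicity and the standing conditions on $q$ is the right bookkeeping.
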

  
 The above theorem deals with the case when $q$ is not a prime power and satisfy some interesting properties. Next we will consider the case of prime  powers and here is the statement of the theorem:
  \begin{theorem}\label{THM2}
  Let $q=p^n$ be any positive integer for some prime $p$. Let $f$ be any even Dirichlet type algebraically valued periodic function with period $q$. Then $L'(0,f)$ is transcendental if and only if $f$ is not an identically zero function.
  \end{theorem}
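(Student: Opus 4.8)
The plan is to use the formula \eqref{8eq9}, which expresses $L'(0,f)$ as an algebraic linear combination of the numbers $\log 2\sin(a\pi/q)$ over $a$ coprime to $q$ with $1 \le a \le q/2$. So the theorem reduces to showing that when $q=p^n$, these logarithms are linearly independent over $\overline{\mathbb Q}$ together with being nonzero — equivalently, that no nontrivial algebraic linear combination of them vanishes. Since $2\sin(a\pi/q) = |1-\zeta_q^a|$ where $\zeta_q = e^{2\pi i/q}$, each $2\sin(a\pi/q)$ is (the absolute value of) a cyclotomic unit, and the key structural input is that these particular cyclotomic units $1-\zeta_q^a$, for $q=p^n$ a prime power, are multiplicatively independent up to roots of unity. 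This is exactly the regime where Ramachandra's theorem on multiplicative independence of cyclotomic units applies cleanly — there is no "extra relation" like the norm relation $\prod_{d\mid q}(\text{something}) = 1$ that forces dependence when $q$ has several prime factors; it is precisely the distinction between the prime-power case (Theorem \ref{THM2}) and the general case (Theorem \ref{THM1}).

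First I would set up the reduction: suppose $L'(0,f)=0$ with $f$ even, Dirichlet-type, and algebraic-valued, not identically zero. By \eqref{8eq9} this gives a relation $\sum_{a} f(a)\log 2\sin(a\pi/q)=0$ with at least one $f(a)\ne 0$ and all $f(a)\in\overline{\mathbb Q}$. Next I would invoke Baker's theorem (the Baker–Gun–Murty–Rath mechanism already used for the quoted theorem of Gun, Murty and Rath): a nontrivial $\overline{\mathbb Q}$-linear relation among logarithms of nonzero algebraic numbers forces the corresponding algebraic numbers $2\sin(a\pi/q)$ to be multiplicatively dependent, i.e. $\prod_a \big(2\sin(a\pi/q)\big)^{m_a}=1$ for integers $m_a$ not all zero. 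Then I would translate this into a multiplicative relation among the cyclotomic units $1-\zeta_q^a$ (absorbing the factor of $2$ and any sign/root-of-unity ambiguity, using that $\prod$ of $|1-\zeta_q^a|$-type quantities being $1$ lifts to $\prod (1-\zeta_q^a)^{m_a}$ being a root of unity in $\mathbb Q(\zeta_q)$). Finally, Ramachandra's theorem (the "Ramachandra units" of the keywords) says that for $q = p^n$ the units $1-\zeta_q^a$, $1\le a < q$, $(a,q)=1$, are multiplicatively independent modulo roots of unity, so all $m_a=0$, a contradiction; hence $L'(0,f)\ne 0$, and being a nonzero period-type value it is transcendental by the Gun–Murty–Rath theorem (or directly by Baker).

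The main obstacle I anticipate is the bookkeeping in the translation step: the sum in \eqref{8eq9} runs only over $a \le q/2$, while Ramachandra's independence statement naturally involves all $a$ with $1\le a<q$, $(a,q)=1$; one must use the symmetry $\zeta_q^{q-a} = \overline{\zeta_q^a}$ (reflecting $2\sin(a\pi/q) = 2\sin((q-a)\pi/q)$, which is why evenness of $f$ is used and why the sum can be folded to $a\le q/2$) to match the two ranges, and carefully handle the factors of $2$ and the roots of unity so that a genuine multiplicative relation among the $1-\zeta_q^a$ emerges with integer exponents not all zero. One also has to be slightly careful that $\mathbb Q(\zeta_q) = \mathbb Q(\zeta_{p^n})$ has no real subfield issues — but since $2\sin(a\pi/q) > 0$ the relation lives in $\mathbb R \cap \mathbb Q(\zeta_q)$ and the only roots of unity available are $\pm 1$, which keeps the argument clean. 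Once this dictionary is in place, Ramachandra's theorem finishes it immediately, and the "only if" direction (if $f\equiv 0$ then trivially $L'(0,f)=0$, so transcendence fails) is immediate.
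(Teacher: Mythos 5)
Your overall reduction coincides with the paper's: use evenness and the reflection formula to arrive at Eq.~\eqref{8eq9} (Eq.~\eqref{8EQ19} in the proof), and then conclude from the $\overline{\Q}$-linear independence, for prime-power $q$, of the numbers $\log 2\sin(a\pi/q)$, $1\le a\le q/2$, $(a,q)=1$, with transcendence of a nonzero value supplied by Baker / Gun--Murty--Rath. The difference is where that independence comes from: the paper simply invokes Proposition~\ref{prop5} (from \cite{CD2}), whereas you propose to rederive it from Baker's theorem plus ``Ramachandra's theorem'' on multiplicative independence of cyclotomic units, and that key input, as you state it, is not correct.

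Concretely: for $q=p^n$ the numbers $1-\zeta_q^a$ with $(a,q)=1$ are \emph{not} units, since each has norm $p$ (indeed $\prod_{(a,q)=1}(1-\zeta_q^a)=p$), so a statement about independence of units does not apply to them as written; moreover, over the full range $1\le a<q$ they are certainly multiplicatively dependent modulo roots of unity, because $1-\zeta_q^{q-a}=-\zeta_q^{-a}(1-\zeta_q^a)$, so the statement can only be made for $a\le q/2$; and the theorem of Ramachandra actually used in this paper (Theorem~\ref{RAMA1}, Theorem 2 of \cite{KR}) is a \emph{dependence} result for composite moduli with two odd prime divisors, employed in the Remark to construct vanishing examples -- it is not the prime-power independence you need. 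The statement you require is that for $q=p^n$ the $\phi(q)/2$ numbers $2\sin(a\pi/q)$, $1\le a\le q/2$, $(a,q)=1$, are multiplicatively independent; this is true, but the clean route is the classical independence of the genuine cyclotomic units $\frac{1-\zeta_q^a}{1-\zeta_q}$, $2\le a\le q/2$, together with a valuation argument at the unique prime above $p$ to adjoin $2\sin(\pi/q)=|1-\zeta_q|$: in any relation $\prod_a(2\sin(a\pi/q))^{m_a}=1$ the $(1-\zeta_q)$-valuation forces the exponent sum attached to the non-unit part to vanish, after which unit independence kills all $m_a$. With that substitution (or by simply citing Proposition~\ref{prop5}, as the paper does), your argument goes through; your Baker step converting an algebraic linear relation into an integer multiplicative relation, the folding over $a\leftrightarrow q-a$, and the trivial ``only if'' direction are all fine.
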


Note that the above theorems implies that $L'(0,f)$ can vanish for infinitely many arithmetic functions and also
 can take transcendental values for infinitely many functions. Thus, as a corollary to  Theorem \ref{THM1} and \ref{THM2}, we have the following result:
\begin{corollary}
Let $q>1$ be an integer. Let $f$ be an even algebraic valued Dirichlet-type periodic function with period $q$. 
Then arithmetic nature of $L'(0,f)$ changes infinitely often for infinitely many functions.
\end{corollary}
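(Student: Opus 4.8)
The plan is to deduce the corollary directly from Theorems \ref{THM1} and \ref{THM2} together with the Gun--Murty--Rath dichotomy recalled above; the only real task is to make the phrase ``changes infinitely often for infinitely many functions'' precise. I read it as the conjunction of two assertions: within the family of even algebraic-valued Dirichlet-type periodic functions, there are infinitely many $f$ with $L'(0,f)=0$, and there are infinitely many $f$ with $L'(0,f)$ transcendental. Since the Gun--Murty--Rath theorem guarantees that $L'(0,f)$ is in every case either $0$ or transcendental (never a nonzero algebraic number), these two facts together say exactly that the arithmetic nature of $L'(0,f)$ is genuinely two-valued over the family and that each possibility occurs infinitely often.

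For the vanishing half I would fix a modulus $q\neq p^{n}$ of the kind considered in Theorem \ref{THM1}, i.e.\ any non-prime-power $q$ satisfying one of the conditions of Proposition \ref{prop3}. Theorem \ref{THM1} then characterizes the even Dirichlet-type algebraic-valued functions of period $q$ with $L'(0,f)=0$ as precisely the functions $f_{c}$ given by $f_{c}(a)=c$ for $(a,q)=1$, $1\le a\le q/2$, and $f_{c}(a)=0$ otherwise, with $c\in\overline{\Q}$. As $c$ runs over $\overline{\Q}$ (indeed already over the positive integers) the $f_{c}$ are pairwise distinct, so this produces infinitely many $f$ with $L'(0,f)=0$.

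For the transcendence half I would use the corollary to Theorem \ref{THM1}: for the same $q$, every \emph{non-constant} even Dirichlet-type algebraic-valued function of period $q$ has $L'(0,f)$ transcendental. For the $q$ at hand one has $\varphi(q)\ge 4$, so there are at least two residues $a_{1}\ne a_{2}$ in $\{a:1\le a\le q/2,\ (a,q)=1\}$; the functions $g$ with $g(a_{1})=c$, $g(a_{2})=1$ and $g\equiv 0$ on all remaining residues, where $c\in\overline{\Q}\setminus\{1\}$, are non-constant Dirichlet-type even functions, pairwise distinct as $c$ varies, and hence give infinitely many $f$ with $L'(0,f)$ transcendental. (Alternatively, Theorem \ref{THM2} already supplies an infinite family with transcendental $L'(0,f)$, namely every nonzero even Dirichlet-type function of any prime-power period.) Combining the two halves yields the corollary.

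I do not anticipate any genuine obstacle, since the entire content sits in Theorems \ref{THM1}, \ref{THM2} and the Gun--Murty--Rath dichotomy. The one point deserving a line of care is that the vanishing examples must be drawn from the non-prime-power case: by Theorem \ref{THM2} the only function of prime-power period with $L'(0,f)=0$ is $f\equiv 0$, so it is precisely the non-prime-power moduli of Proposition \ref{prop3} that make the zero case rich.
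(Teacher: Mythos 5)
Your proposal is correct and follows the paper's own (implicit) reasoning exactly: the paper deduces this corollary directly from Theorems \ref{THM1} and \ref{THM2}, noting that the constant functions of Theorem \ref{THM1} give infinitely many $f$ with $L'(0,f)=0$ while the non-constant (resp.\ nonzero) functions give infinitely many $f$ with $L'(0,f)$ transcendental, which is precisely your two-family argument combined with the Gun--Murty--Rath dichotomy. Your added care about drawing the vanishing examples only from the non-prime-power moduli of Proposition \ref{prop3} is a correct and welcome clarification, not a departure from the paper's route.
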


So far we have considered the case of prime powers and product of primes satisfying certain conditions. Now for the remaining cases when $q$ is a product of at least two prime factors and $q$ does not satisfy any of the conditions given in Proposition \ref{prop3}, we have the following remark:

 \begin{remark}
For the case when $q$ does not satisfy any of the conditions in Proposition \ref{prop3} and  $f$
 is not a constant function, we will use the following theorem 
of Ramachandra (see Theorem 2 of  \cite{KR}) to find an explicit example where $L'(0,f)$ is zero for a non-trivial Dirichlet type algebraic valued function :
\begin{theorem}\label{RAMA1}(Ramachandra).
Let $q>1$ be a positive integer with at least two odd prime divisors  $p_1,p_2$.
Assume $p_1$ have the property that the residue class group modulo $p_1$ has a non-principal character
$\chi$ with $\chi(-1)=1$ and $p_2 \equiv 1 \text{ mod } p_1.$ Then the units 
\begin{align*}
\frac{a^s-1}{a-1}, \ \  2 \leq s \leq q/2,\ \ (s,q)=1
\end{align*}
where $a$ is any primitive root modulo $q,$ are multiplicatively dependent.
\end{theorem}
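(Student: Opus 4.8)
The plan is to go through algebraic number theory: translate multiplicative dependence of the listed units into the vanishing of a regulator-type determinant, diagonalise that determinant over Dirichlet characters, and read off a vanishing factor directly from the congruence $p_2\equiv 1\pmod{p_1}$. Write $\zeta=\zeta_q=e^{2\pi i/q}$ (the element denoted $a$ in the statement, necessarily a primitive $q$-th root of unity since $q$ admits no integer primitive root) and $u_s=\dfrac{\zeta^s-1}{\zeta-1}$. Since $q$ has two distinct odd prime divisors it is not a prime power, so $1-\zeta$ is a unit of $\Z[\zeta]$ and hence every $u_s$ with $(s,q)=1$ is a unit; moreover $u_{q-s}=-\zeta^{-s}u_s$ differs from $u_s$ by a root of unity. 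The first step is to observe that the number of indices $s$ with $2\le s\le q/2$ and $(s,q)=1$ equals $n:=\phi(q)/2-1$, which is exactly the free rank of the unit group of $\Q(\zeta_q)$ (signature $(0,\phi(q)/2)$). Therefore these units are multiplicatively dependent if and only if the real $\tfrac{\phi(q)}{2}\times n$ matrix $M$ with entries
$$M_{t,s}=\log\bigl|1-\zeta^{st}\bigr|-\log\bigl|1-\zeta^{t}\bigr|,$$
rows indexed by $t\in(\Z/q\Z)^{\times}/\{\pm1\}$ and columns by the admissible $s$, has rank strictly less than $n$.

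Next I would diagonalise $M$ using the characters of $(\Z/q\Z)^{\times}/\{\pm1\}$, i.e. the even Dirichlet characters $\chi$ modulo $q$. Re-indexing $t\mapsto s^{-1}t$ in the first term shows that the row combination $\sum_t\chi(t)M_{t,\cdot}$ equals, up to a harmless nonzero constant, the vector $\bigl((\overline\chi(s)-1)\,S_\chi\bigr)_s$, where $S_\chi=\sum_{(t,q)=1}\chi(t)\log|1-\zeta^t|$. The principal character contributes $0$, and for every non-principal even $\chi$ the coefficient vector $(\overline\chi(s)-1)_s$ is nonzero (the admissible $s$ together with $s=1$ form a full set of representatives for $(\Z/q\Z)^{\times}/\{\pm1\}$, so $\overline\chi\equiv 1$ on all of them would force $\chi=\chi_0$). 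Hence $\operatorname{rank}(M)\le\#\{\chi\neq\chi_0\ \text{even}:S_\chi\neq 0\}$, and it suffices to exhibit a single non-principal even character $\chi$ modulo $q$ with $S_\chi=0$.

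To evaluate $S_\chi$ for $\chi$ of conductor $f$, I would invoke the conductor-descent identity for cyclotomic units — the same phenomenon by which $L(s,\chi)$ acquires Euler factors on inflation — namely
$$S_\chi=\Bigl(\prod_{\ell\mid q,\ \ell\nmid f}\bigl(1-\overline\chi(\ell)\bigr)\Bigr)\,\widetilde S_\chi,\qquad \widetilde S_\chi=\sum_{(s,f)=1}\chi(s)\log\bigl|1-\zeta_f^{\,s}\bigr|,$$
together with the classical evaluation $\widetilde S_\chi=-\dfrac{f}{\tau(\overline\chi)}\,L(1,\overline\chi)$, which is nonzero since $L(1,\overline\chi)\neq 0$ for $\overline\chi$ non-principal. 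Thus $S_\chi=0$ exactly when some rational prime $\ell\mid q$ with $\ell\nmid f$ satisfies $\chi(\ell)=1$. Now the hypotheses finish the argument: by assumption $(\Z/p_1\Z)^{\times}$ carries a non-principal character $\chi$ with $\chi(-1)=1$, and inflating it to modulus $q$ gives a character of conductor exactly $f=p_1$. Since $p_2\mid q$, $p_2\neq p_1$, and $p_2\equiv 1\pmod{p_1}$, we get $\overline\chi(p_2)=\chi(1)=1$, so the factor $1-\overline\chi(p_2)$ vanishes; hence $S_\chi=0$, $\operatorname{rank}(M)\le n-1$, and the units $\dfrac{\zeta^s-1}{\zeta-1}$ with $2\le s\le q/2$, $(s,q)=1$ are multiplicatively dependent.

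I expect the main obstacle to be the conductor-descent identity of the third paragraph: carrying out the Gauss-sum bookkeeping and verifying that each prime of $q$ not dividing $f$ contributes precisely one factor $1-\overline\chi(\ell)$ is fiddly, even though the \emph{shape} of the answer (a nonzero ``primitive part'' times such a product) is forced and is all the conclusion needs. A secondary point needing care is the passage between $\mathcal O^{\times}/\mu$ and the matrix $M$ — the $\pm1$-identification and the claim that the character combinations span the row space of $M$ — but only the easy implication is used here, namely that a single vanishing even $S_\chi$ already forces rank deficiency and hence dependence.
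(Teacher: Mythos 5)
The paper itself offers no proof of this statement: it is quoted verbatim as Theorem 2 of Ramachandra's \emph{On the units of cyclotomic fields} (Acta Arith.\ 12 (1966)), so there is no in-paper argument to compare yours against. Judged on its own, your proof is correct, and it is in essence the classical (indeed Ramachandra's own) argument: you correctly read ``primitive root modulo $q$'' as a primitive $q$-th root of unity (forced, since $(\Z/q\Z)^{\times}$ is non-cyclic here, and consistent with how the paper later uses the theorem with $a=\zeta_q$); you reduce multiplicative dependence to a rank drop of the logarithmic matrix $M$, diagonalise over the even characters of $(\Z/q\Z)^{\times}/\{\pm1\}$ so that $\operatorname{rank}(M)$ is bounded by the number of non-principal even $\chi$ with $S_\chi\neq0$, and then kill one $S_\chi$ via the conductor-descent identity applied to the inflation of the given even character mod $p_1$, using $p_2\equiv1\pmod{p_1}$ to make the factor $1-\chi(p_2)$ vanish. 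Two small remarks: in the descent identity the factor should be $1-\psi(\ell)$ rather than $1-\overline{\psi}(\ell)$ (one checks this by grouping $t$ by its residue $a$ mod $f$ and substituting $b=a\ell^{-1}$), but since $\psi(p_2)=\overline{\psi}(p_2)=1$ here the discrepancy is immaterial; and the ``Euler factor'' analogy is loose, as the $L$-function factor at $s=1$ is $1-\overline{\psi}(\ell)\ell^{-1}$, not $1-\overline{\psi}(\ell)$ --- again harmless, since the dependence direction needs only the vanishing of the product factor, not the evaluation $\widetilde S_\chi=-\tfrac{f}{\tau(\overline{\chi})}L(1,\overline{\chi})\neq0$, which you would only need for a converse. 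You also correctly observe that only the easy implication (a vanishing $S_\chi$ forces rank deficiency, hence a $\Z$-relation among the units modulo roots of unity, hence a genuine relation after raising to the order of the torsion) is required, so the flagged delicate points do not affect the validity of the argument.
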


\textbf{Example:} Suppose $q=155,$  then $o(5)=3(\text{ mod }31)$ and hence $q$ does not satisfy any of the conditions in 
Proposition \ref{prop3}. Define a non-principal real character $\chi_0$ modulo 5 such that
\begin{align*}
\chi_0(1)=\chi_0(4)=1 \ \text{ and }  \  \chi_0(2)=\chi_0(3)=-1.
\end{align*}
Clearly $q$ satisfy the conditions given in Theorem \ref{RAMA1} and $\chi_0$ is an even character.
Define $\mathfrak{R_q}$ be the multiplicative group of residue classes prime to $q$ modulo the subgroup generated by classes
 $1$ and $-1$.  Then order of the group $\mathfrak{R_q}$ is 60. Extend $\chi_0$ to a character $\chi$ of $\mathfrak{R_q},$
  then by using the Theorem \ref{RAMA1}, we have 
\begin{align*}
\bigg|\prod \limits_{\substack{(s,q)=1 \\ 2 \leq s \leq q/2}}\left(\frac{a^s-1}{a-1}\right)^{\chi(s)}\bigg|=1.
\end{align*}
Now taking $a=\zeta_q$ which is a primitive $q$-th root of unity, we get
\begin{align}\label{8eq10}
\bigg|\prod \limits_{\substack{(s,q)=1 \\ 2 \leq s \leq q/2}}\left(\frac{\zeta_q^s-1}{\zeta_q-1}\right)^{\chi(s)}\bigg|=1
\end{align}
We also need the following identity of the sine function at rational arguments (see Lemma 3.3 of \cite{CD1})
\begin{equation}\label{8eq11}
2^{\phi(q)}\prod_{\substack{k=1,\\ (k, q )=1  }}^{q-1}  \sin\left( \frac{ k\pi}{q}\right)=1,
\end{equation}
when $q$ is not a prime power.
Since $|1-\zeta_q^k|=2 \sin \left(\frac{k \pi}{q}\right),$ substituting the value of $\log 2 \sin (\pi/q)$ from
 Eq. \eqref{8eq11} in Eq. \eqref{8eq10} we have
\begin{align}\label{8eq12}
\bigg|\prod \limits_{\substack{(s,q)=1 \\ 2 \leq s \leq q/2}}\left(1-\zeta_q^s\right)^{\chi(s)
+\sum \limits_{\substack{(b,q)=1 \\ 2 \leq b \leq q/2}}\chi(b)}\bigg|=1
\end{align}
and for the character $\chi,$ one can easily calculate that the sum
\begin{align*}
\sum \limits_{\substack{(b,q)=1 \\ 2 \leq b \leq q/2}}\chi(b)=-1.
\end{align*}
Thus Eq. \eqref{8eq12} reduces to
\begin{align}\label{8eq13}
\sum \limits_{\substack{(s,q)=1 \\ 2 \leq s \leq q/2}}\left(\chi(s)-1\right)\log 2 \sin \left(\frac{s\pi}{q}\right)=0
\end{align}
and hence the numbers $ \log2 \sin (s\pi/q)$ with $(s,q)=1$ and $2 \leq s \leq q/2$ are linearly dependent 
over the field of algebraic numbers 
by using Baker's theorem. Now define an even Dirichlet-type periodic function with period $q$ such that 
$f(s)-f(1)= \chi(s)-1$ where $2 \leq s \leq q/2$ 
with $(s,q)=1.$ Then by using Eq. \eqref{8eq9}, \eqref{8eq11} and \eqref{8eq13}, we have
\begin{align*}
 L'(0,f)= -\sum\limits_{\substack{s=2 \\ (s,q)=1}}^{q/2}\bigg( f(s)-f(1) \bigg) \left(\log 2\sin \frac{\pi s}{q}  \right)= 
  -\sum\limits_{\substack{s=2 \\ (s,q)=1}}^{q/2} \left(\chi(s)-1\right)  \left(\log 2\sin \frac{\pi s}{q}  \right)=0,
\end{align*}
and $\chi$ is clearly a non-constant function. Thus for different values of $f(1)$ there exists infinitely many 
non-constant functions $f$ such that $L'(0,f)=0$.
\end{remark}

In fact by using a similar example for the case when $q=55$, we have constructed a counter example to the variant of Rohrlich-Lang conjecture about the linear independence of logarithm of gamma function due to Gun, Murty and Rath (see \cite{CD21} and \cite{CD6}).

\section{\bf Notations and Preliminaries}
 
 In this section, we list some of the known results  that will help us to develop a setting for our results.
 The most important ingredient for many of our results is the following theorem of Baker about the 
 linear forms in logarithms of algebraic
numbers (see \cite{AB} ).

\begin{proposition}\label{prop1} If $\alpha_1,...,\alpha_n$ are non-zero algebraic numbers such that $\log  \alpha_1,...,\log  \alpha_n$
are linearly independent over the field of rational numbers, then $1, \log  \alpha_1,...,\log \alpha_n$ are linearly 
independent over the field of algebraic numbers.
\end{proposition}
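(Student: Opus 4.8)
The statement is A.~Baker's celebrated theorem on linear forms in logarithms, so any proof must reproduce (or cite) Baker's transcendence argument, which refines the Gelfond--Schneider method to arbitrarily many logarithms. The plan is to argue by contradiction: suppose $1,\log\alpha_1,\dots,\log\alpha_n$ are linearly dependent over $\overline{\Q}$, so that $\beta_0+\beta_1\log\alpha_1+\cdots+\beta_n\log\alpha_n=0$ for algebraic numbers $\beta_0,\dots,\beta_n$ not all zero. Since $\log\alpha_1,\dots,\log\alpha_n$ are assumed $\Q$-linearly independent, some $\beta_j$ with $j\ge 1$ is nonzero; after relabelling I may take $\beta_n\neq 0$, divide through to express $\log\alpha_n$ as an $\overline{\Q}$-linear combination of $1,\log\alpha_1,\dots,\log\alpha_{n-1}$, and enlarge the ground field so that all $\alpha_i$ and all the coefficients lie in one number field $K$ of degree $d$. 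The objective is then to manufacture from this relation a nonzero algebraic integer of $K$ that analytic estimates force to be impossibly small.

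Next I would build the auxiliary function. Fixing integer parameters $L$ and $M$ to be calibrated later, I would invoke Siegel's lemma to produce rational integers $p(\lambda_0,\dots,\lambda_{n-1})$, not all zero and of controlled height, so that the exponential polynomial
\[
\Phi(z_0,z_1,\dots,z_{n-1})=\sum_{\lambda_0=0}^{L}\cdots\sum_{\lambda_{n-1}=0}^{L}p(\lambda_0,\dots,\lambda_{n-1})\,z_0^{\lambda_0}\prod_{j=1}^{n-1}\alpha_j^{\lambda_j z_j},
\]
in which $z_0$ enters only polynomially so as to carry the inhomogeneous term $\beta_0$, vanishes together with all of its partial derivatives up to order $M$ at every point $(\ell,\ell,\dots,\ell)$ with $\ell$ in an initial range. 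The point is that, because the supposed relation rewrites $\log\alpha_n$ in terms of $1,\log\alpha_1,\dots,\log\alpha_{n-1}$, each such vanishing requirement becomes a linear equation over $K$ with algebraic coefficients of bounded size and bounded denominator; a dimension count shows there are more unknowns $p(\vec\lambda)$ than equations, so a nontrivial solution exists.

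The heart of the argument, and the step I expect to be the main obstacle, is the extrapolation. Using the maximum modulus principle together with Hermite-type interpolation in several complex variables, I would show that vanishing of $\Phi$ to order $M$ on the initial box of integer points forces $\Phi$ to vanish, to a somewhat smaller order, on a much larger box, and I would iterate this enlargement several times, carefully balancing $L$, $M$, the box sizes and the number of iterations so that the analytic upper bounds stay under control at every stage. Pushing $n$ logarithms simultaneously through this induction is precisely what the two-variable Gelfond--Schneider method could not achieve, and getting the parameter bookkeeping to close is the delicate part.

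After enough iterations one reaches a configuration of integer points at which the relevant linear system has a generalized Vandermonde coefficient matrix built from the quantities $\lambda_1\log\alpha_1+\cdots+\lambda_{n-1}\log\alpha_{n-1}$; here the hypothesis that the $\log\alpha_i$ are $\Q$-linearly independent is used essentially, since it guarantees these quantities are pairwise distinct, so the determinant does not degenerate and not all $p(\vec\lambda)$ can vanish. Consequently some explicit value or derivative of $\Phi$ at an integer point is a nonzero algebraic integer of $K$; a Liouville-type estimate bounds its absolute value below in terms of $d$ and the sizes of its conjugates, while the analytic smallness squeezed out of the extrapolation bounds it strictly above that threshold, a contradiction. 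This establishes the proposition, with the cases $n=1$ (Hermite--Lindemann) and $n=2$ with $\beta_0=0$ (Gelfond--Schneider) recovered as the low-dimensional instances of the same scheme.
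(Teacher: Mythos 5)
The paper does not prove this proposition at all: it is Baker's theorem, quoted verbatim from Baker's \emph{Transcendental Number Theory} (the citation \cite{AB}), and in the context of this paper a citation is the intended ``proof.'' So the only meaningful comparison is with Baker's original argument, which you correctly recognize and whose broad architecture you reproduce: reduction to a relation $\beta_0+\beta_1\log\alpha_1+\cdots+\beta_{n-1}\log\alpha_{n-1}=\log\alpha_n$ over a number field, an auxiliary exponential polynomial constructed via Siegel's lemma, extrapolation of high-order vanishing to larger ranges of integer points, a nondegeneracy step where the $\Q$-linear independence of the logarithms is used, and a final clash between a Liouville-type lower bound and an analytic upper bound.

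As a standalone proof, however, there is a genuine gap, in fact two. First, the step you yourself flag as ``the main obstacle'' --- the extrapolation with its balancing of $L$, $M$, the box sizes and the number of iterations --- is precisely where the entire content of Baker's theorem lives; describing that it must be done is not doing it, and no amount of dimension counting or maximum-modulus hand-waving substitutes for the explicit parameter bookkeeping (this is several pages of delicate estimates in Baker's book). Second, and more concretely, the auxiliary function you display,
\begin{align*}
\Phi(z_0,z_1,\dots,z_{n-1})=\sum_{\lambda_0=0}^{L}\cdots\sum_{\lambda_{n-1}=0}^{L}p(\lambda_0,\dots,\lambda_{n-1})\,z_0^{\lambda_0}\prod_{j=1}^{n-1}\alpha_j^{\lambda_j z_j},
\end{align*}
contains neither $\alpha_n$ nor any index $\lambda_n$, nor the coefficients $\beta_j$, so as written the assumed algebraic relation never enters the construction and nothing about the hypothesis is used: its values at integer points are algebraic for trivial reasons. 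In Baker's construction the function carries exponents of the form $(\lambda_j+\lambda_n\beta_j)z_j\log\alpha_j$ together with a factor involving $\lambda_n\beta_0$, and it is exactly the substitution of the assumed relation at the diagonal points $z_0=\cdots=z_{n-1}=\ell$ that converts the analytic vanishing conditions into algebraic equations and, eventually, into the contradiction. Without that mechanism your outline cannot close, so the proposal should be regarded as a correct identification of the theorem and of the proof strategy, but not as a proof; for the purposes of this paper the appropriate course is the one the authors take, namely to cite Baker.
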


The following proposition by Pei  and Feng (\cite{DY}) is of great advantage in finding the  necessary and sufficient conditions for 
multiplicative independence cyclotomic units for prime powers and product of distinct primes  (for a proof  see \cite{DY}). Before 
that we define $n$  a semi-primitive root mod $q,$ if the order of $n($mod $q)$  is  $\frac{\phi(q)}{2} $.
\begin{proposition}\label{prop3}
For a composite number $m$ with $m \not \equiv 2 ( \text{ mod } 4),$ the system 
$$\bigg \{\frac{1-\zeta_m^h}{1-\zeta_m}\big | \ (h,m)=1, \ 2 \leq h < m/2\bigg\}$$
of cyclotomic units of field $\Q(\zeta_m)$  is independent if and only if one of the following conditions 
are satisfied (here $\alpha_0 \geq 3; \alpha_1, \alpha_2, \alpha_3 \geq 1; p_1, p_2 ,p_3$ are
odd primes): \\
 (I) $m=4p_1^{\alpha_1}$; and 
 
(I, 1) 2 is a primitive root mod $p_1^{\alpha_1}$; or 

(I, 2) 2 is a semi-primitive root mod $p_1^{\alpha_1}$ and $p_1 \equiv 3 \text{ (mod } 4)$.\\
(II) $m=2^{\alpha_0}p_1^{\alpha_1}$; the order of $p_1( \text{mod } 2^{\alpha_0})$ is $2^{\alpha_0-2}$, 
$2^{\alpha_0-3}p_1 \not \equiv-1( \text{ mod }2^{\alpha_0}),$ and

(II, 1) 2 is a primitive root mod $p_1^{\alpha_1}$; or 

(II, 2) 2 is a semi-primitive root mod $p_1^{\alpha_1}$ and $p_1 \equiv 3 \text{ (mod } 4)$.\\
(III) $m=p_1^{\alpha_1}p_2^{\alpha_2}$; and 

(III,1) when $p_1 \equiv p_2 \equiv 3 \text{ (mod }4)$: $p_1$ is a semi-primitive root mod 
$p_2^{\alpha_2}$ and $p_2$ is a semi-primitive root mod $p_1^{\alpha_1},$ or vice versa.

(III, 2) otherwise: $p_1$ and $p_2$ are primitive root mod $p_2^{\alpha_2}$ and mod $p_1^{\alpha_1}$ respectively. \\
(IV) $m=4p_1^{\alpha_1}p_2^{\alpha_2}; (p_1-1,p_2-1)=2$ and 

(IV, 1) when $p_1 \equiv p_2 \equiv 3 \text{ (mod }4)$: 2 is a primitive root for one $p$ and a semi-primitive root 

for another $p$; $p_1$ is primitive root mod $2p_2^{\alpha_2}$ and $p_2$ is a semi-primitive root mod $2p_1^{\alpha_1}$ or

 vice versa.

(IV, 2)  when $p_1 \equiv 1, p_2 \equiv 3 \text{ (mod } 4)$: 2 is a primitive root mod $p_2^{\alpha_2}$; $p_1$ and $p_2$ are primitive

 root mod $p_2^{\alpha_2}$ and mod $p_1^{\alpha_1},$ respectively.\\
(V)$m=p_1^{\alpha_1}p_2^{\alpha_2}p_3^{\alpha_3}$; $p_1 \equiv  p_2 \equiv p_3 \equiv 
3 \text{ (mod } 4)$: $(p^i-1)/2 \ \  ( 1 \leq i \leq 3)$ are co-prime to each other; and 

(V,1) $p_1, p_2, p_3$ are primitive root mod $p_2^{\alpha_2},$ mod $p_3^{\alpha_3},$ mod $p_1^{\alpha_1},$ respectively and semi-primitive
 root mod $p_3^{\alpha_3}$, mod $p_1^{\alpha_1}$, mod $p_2^{\alpha_2}$, respectively.
\end{proposition}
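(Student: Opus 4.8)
The plan is to translate multiplicative independence of the system into the non-vanishing of a finite family of numbers attached to the even Dirichlet characters of conductor dividing $m$, and then to read off conditions (I)--(V) by a direct analysis of the character group of $(\Z/m\Z)^\times$.

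Set $K=\Q(\zeta_m)$, $G=\mathrm{Gal}(K/\Q)\cong(\Z/m\Z)^\times$, and $\xi_h:=(1-\zeta_m^h)/(1-\zeta_m)$. The key preliminary facts are: (a) the unit rank of $\mathcal O_K$ equals $\phi(m)/2-1$; and (b) the set $\{h:(h,m)=1,\ 2\le h<m/2\}$ is a full set of representatives for $(G/\{\pm1\})\setminus\{1\}$, since $m/2\notin(\Z/m\Z)^\times$ and $\xi_{-h}$ is a root of unity times $\xi_h$. Thus the system has exactly $r:=\phi(m)/2-1$ elements, and it is independent if and only if the vectors $\ell(\xi_h)=(\log|\sigma_a\xi_h|)_{a\in G/\{\pm1\}}$ are $\R$-linearly independent. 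Since $\log|\sigma_a\xi_h|=g(ah)-g(a)$ with $g(c):=\log|1-\zeta_m^c|$ well defined on $G/\{\pm1\}$, the $\psi$-isotypic component of $\ell(\xi_h)$ equals $(\psi(h)-1)\,\widehat g(\psi)$ for every even character $\psi$, where $\widehat g(\psi)=\sum_{c\in G/\{\pm1\}}\overline{\psi(c)}\,g(c)$ (in particular $\ell(\xi_h)$ lies in the trace-zero part, $\psi\neq1$). One checks the matrix $(\psi(h)-1)_{\psi\neq1,\ h\neq1}$ is invertible (e.g.\ by subtracting the trivial-character row from the character table), so the system is independent \emph{if and only if} $\widehat g(\psi)\neq0$ for every even $\psi\neq1$.

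Next I would compute $\widehat g(\psi)$. Let $\chi$ be the primitive character of conductor $f\mid m$ inducing $\psi$. Grouping the defining sum by residues modulo $f$, the inner sums become logarithms of $N_{\Q(\zeta_m)/\Q(\zeta_f)}(1-\zeta_m)$, which, by transitivity of the norm and $\prod_{i=0}^{\ell-1}(1-\zeta_\ell^i x)=1-x^\ell$, differs from $1-\zeta_f$ by a product of factors $\sigma_\ell(1-\zeta_f)/(1-\zeta_f)$ over the primes $\ell\mid m$ with $\ell\nmid f$; this reduces $\widehat g(\psi)$ to the conductor-$f$ twisted sum up to the Euler-type factor $\prod_{\ell\mid m,\ \ell\nmid f}(1-\chi(\ell))$. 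Combined with the classical evaluation $\sum_c\overline\chi(c)\log|1-\zeta_f^c|=-\tfrac{f}{\tau(\chi)}L(1,\chi)$ for even primitive $\chi$, this yields
\begin{equation*}
\widehat g(\psi)=c_\psi\Big(\prod_{\ell\mid m,\ \ell\nmid f}\big(1-\chi(\ell)\big)\Big)L(1,\chi),\qquad c_\psi\neq0.
\end{equation*}
Since $L(1,\chi)\neq0$ for $\chi\neq1$, this vanishes precisely when $\chi(\ell)=1$ for some prime $\ell\mid m$ with $\ell\nmid f$. Hence the system is independent if and only if there is \emph{no} pair $(\chi,\ell)$ with $\chi$ a non-trivial even character of conductor $f\mid m$, $\ell\mid m$ prime, $\ell\nmid f$, and $\chi(\ell)=1$. (A prime power $m=p^k$ satisfies this vacuously, since the inner product is empty whenever $\ell=p$, recovering the classical independence there; the substance of the proposition is the case of at least two prime divisors.)

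Finally I would convert ``no bad pair'' into (I)--(V). By CRT write $(\Z/m\Z)^\times\cong\prod_i(\Z/p_i^{\alpha_i}\Z)^\times$ with a possible $2$-power factor, so $\chi=\prod_i\chi_i$ and ``even'' means $\prod_i\chi_i(-1)=1$; for $\ell\mid m$ with $\ell\nmid f_\chi$, the condition $\chi(\ell)=1$ says $\ell$, read in the complementary component, lies in $\ker\chi$. Absence of a bad pair thus amounts to: for each prime power $p^{\alpha}\|m$, the classes $-1$ together with the other prime divisors of $m$ generate $(\Z/p^{\alpha}\Z)^\times$ (with the adjustment that $(\Z/2^{\alpha_0}\Z)^\times\cong\Z/2\times\Z/2^{\alpha_0-2}$ when $p=2$). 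For odd $p$ the group is cyclic, so this becomes: the relevant prime is a primitive root mod $p^{\alpha}$, or a semi-primitive root with $-1$ a non-square mod $p^{\alpha}$, i.e.\ $p\equiv3\pmod4$; for $p=2$ one obtains the $2$-adic statements (the exact order of $p_1$ mod $2^{\alpha_0}$ and the auxiliary congruence $2^{\alpha_0-3}p_1\not\equiv-1$). Running this bookkeeping through the possibilities for the number of odd prime divisors of $m$ and the power of $2$ dividing $m$ produces exactly (I)--(V). I expect this last step to be the main obstacle: the criterion is the standard Kummer--Sinnott regulator computation, but the translation is delicate, the hard points being the mixed cases in (III)--(V) where the conditions at two or three odd primes interact, the distinction between the $2$-power shapes $4p_1^{\alpha_1}$ and $2^{\alpha_0}p_1^{\alpha_1}$, and keeping track of the small conductors that carry no non-trivial even character.
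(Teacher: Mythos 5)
You should first note that the paper itself contains no proof of Proposition \ref{prop3}: it is quoted from Pei and Feng \cite{DY} with a pointer to that paper, so your proposal can only be measured against the route of the cited source. Your core strategy is indeed the standard one (essentially that of \cite{DY}, building on Ramachandra \cite{KR}): the system has exactly $r=\phi(m)/2-1$ elements, $\Z$-independence of $r$ units is equivalent to $\R$-independence of their logarithmic vectors because these lie in the rank-$r$ unit lattice, the Dedekind-determinant/Fourier argument reduces independence to the non-vanishing of $\widehat g(\psi)$ for all even $\psi\neq 1$, and the evaluation through the inducing primitive character gives $\widehat g(\psi)$ as a nonzero multiple of $\prod_{\ell\mid m,\ \ell\nmid f}(1-\chi(\ell))\,L(1,\chi)$, so that independence fails exactly when some nontrivial even $\chi$ of conductor $f\mid m$ satisfies $\chi(\ell)=1$ for a prime $\ell\mid m$ with $\ell\nmid f$. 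Up to this point the sketch is correct and matches the established method.

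The genuine gap is in your final translation of the ``no bad pair'' criterion, which is precisely the step that produces the list (I)--(V). A bad pair for the prime $\ell$ exists if and only if there is a nontrivial character of $(\Z/m_\ell\Z)^\times$, with $m_\ell:=m/\ell^{v_\ell(m)}$ the \emph{full} complementary factor, that is trivial on both $-1$ and $\ell$; hence the correct condition is that for every prime $\ell\mid m$ the classes $\ell$ and $-1$ generate all of $(\Z/m_\ell\Z)^\times$. You instead state the condition componentwise: for each prime power $p^{\alpha}\Vert m$, the class $-1$ together with \emph{all} the other prime divisors of $m$ should generate $(\Z/p^{\alpha}\Z)^\times$. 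That is a strictly weaker requirement, and the difference is exactly where the content of cases (IV) and (V) lies: demanding that a single prime $\ell$ (with $-1$) generate a product group forces $(\Z/m_\ell\Z)^\times/\{\pm1\}$ to be cyclic, and this is what yields the constraints $(p_1-1,p_2-1)=2$ in (IV) and the pairwise coprimality of the $(p_i-1)/2$ in (V); your version would never produce them (for $m=p_1^{\alpha_1}p_2^{\alpha_2}p_3^{\alpha_3}$ it only asks that $p_j,p_k$ jointly generate mod $p_i^{\alpha_i}$, which is far weaker than asking that $p_i$ alone generate $(\Z/p_j^{\alpha_j}p_k^{\alpha_k}\Z)^\times$ up to sign). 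So the ``bookkeeping'' you deferred is not bookkeeping: as formulated, it would not reproduce conditions (I)--(V), and the case analysis must be redone from the correct per-prime criterion over the full complementary modulus.
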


The next proposition is of great use as it extends the idea of multiplicative independence of cyclotomic units in 
Proposition \ref{prop3} for the case when $q \equiv 2 \text{ (mod 4)}$ (for a proof see \cite{CD2}).

\begin{proposition}\label{prop4}Let $q \geq 2$ be a positive integer. Then the numbers
$$ \left \{ \log \left( 2 \sin \frac{a\pi}{q} \right) : 1 < a < \frac{q}{2} , ~(a,q)=1,\ \ a/q \neq  \frac{1}{\pi}\left( \sin^{-1} \frac{1}{2^\alpha} \right),
\ \alpha \in \Q   \right \}, \  \ \pi  \text{ \ \ and  \ \ } \log 2,  $$ are linearly independent over the field of algebraic numbers  if and only if
$q$ satisfy one of the following conditions:\\
a) $q$ is a prime power \\ b) $q=2p^n,$ where $p$ is an odd prime and $n \in \N$\\
 c) $q$ satisfies the conditions  in proposition 2.3.\\
 d) $q=2m$ where $m$ satisfies conditions III and V in proposition 2.3.

\end{proposition}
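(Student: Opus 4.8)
Write $S_q:=\{\log(2\sin(a\pi/q)) : a \text{ admissible}\}\cup\{\log 2\}$, where $a$ is \emph{admissible} if $1<a<q/2$, $(a,q)=1$ and $\beta_a:=(1-\zeta_q^a)(1-\zeta_q^{-a})=4\sin^2(a\pi/q)\notin 2^{\Q}$ (this last condition being exactly $a/q\neq\tfrac1\pi\sin^{-1}(2^{-\alpha})$ for every $\alpha\in\Q$); the numbers in Proposition \ref{prop4} are those of $S_q$ together with $\pi$. The plan is to peel off the transcendence using Baker's theorem. Since $2\sin(a\pi/q)=|1-\zeta_q^a|$ we have $\log(2\sin(a\pi/q))=\tfrac12\log\beta_a$ with $\beta_a$ a positive real algebraic number, and $\pi=-i\log(-1)$. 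A hypothetical $\overline{\Q}$-linear relation among the numbers of Proposition \ref{prop4} becomes, through these substitutions, a $\overline{\Q}$-linear relation among the logarithms $\log\beta_a$, $\log 2$, $\log(-1)$ of algebraic numbers in which $1$ does not occur; by Proposition \ref{prop1} this is impossible provided $\{\log\beta_a : a\text{ admissible}\}\cup\{\log 2\}$ is $\Q$-linearly independent (then so is this set enlarged by the purely imaginary number $\log(-1)$), and conversely $\overline{\Q}$-independence of $S_q\cup\{\pi\}$ trivially forces that $\Q$-independence. As $\Q$-linear independence of logarithms of positive reals is multiplicative independence, the assertion reduces to: $\{\beta_a : a\text{ admissible}\}\cup\{2\}$ is multiplicatively independent $\iff q$ satisfies (a)--(d). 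The inadmissible indices must be dropped, since $\beta_a\in 2^{\Q}$ forces $\log(2\sin(a\pi/q))\in\Q\log 2$; they get absorbed by the single generator $\log 2$.

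Next I would reduce the case $q\equiv 2\pmod 4$ to the odd case. With $q=2m$, $m$ odd, one has $\Q(\zeta_{2m})=\Q(\zeta_m)$, and from $\zeta_{2m}^{\,a}=-\zeta_m^{(a+m)/2}$ for odd $a$ the identity $1-\zeta_{2m}^{\,a}=(1-\zeta_m^{\,a})/(1-\zeta_m^{(a+m)/2})$; taking absolute values writes each $\log(2\sin(a\pi/(2m)))$ as a $\Z$-combination of the numbers $\log(2\sin(b\pi/m))$, $(b,m)=1$, and since $a\mapsto (a+m)\,\overline2\bmod m$ permutes the units mod $m$ the relation inverts. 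Hence the $\Q$-span of the admissible logarithmic sines at level $2m$ together with $\log 2$ equals the one at level $m$, so $S_{2m}$ is $\Q$-linearly independent exactly when $S_m$ is. As an odd $m>1$ satisfies a condition of Proposition \ref{prop3} precisely when it is of type (III) or (V), this identifies case (b) with the prime-power case for $m$, case (d) with case (c) for $m$, and leaves no odd composite $m$ outside the dichotomy.

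The main step is $q\neq p^n$ with $q\not\equiv 2\pmod 4$. Here $\Phi_q(1)=1$, so the numbers $\xi_h=(1-\zeta_q^h)/(1-\zeta_q)$, $2\le h<q/2$, $(h,q)=1$, are units, and \eqref{8eq11} gives $\log(2\sin(\pi/q))=-\sum_{2\le k<q/2,\,(k,q)=1}\log(2\sin(k\pi/q))$. Plugging this into $\log|\xi_h|=\log(2\sin(h\pi/q))-\log(2\sin(\pi/q))$ exhibits $(n_h;n_0)\mapsto\bigl(n_h+\sum_j n_j;\,n_0\bigr)$ as an invertible linear map carrying $\Q$-relations among $\{\log|\xi_h|\}\cup\{\log 2\}$ onto $\Q$-relations among $\{\log(2\sin(h\pi/q)):2\le h<q/2\}\cup\{\log 2\}$. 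Incorporating the admissible/inadmissible split — each inadmissible $h$ contributing a relation $\log(2\sin(h\pi/q))=\alpha_h\log 2$, $\alpha_h\in\Q$, which also expresses $\log 2$ through the $\log|\xi_h|$ unless every admissible sine equals $\tfrac12$ (a case done by inspection) — one concludes that $S_q$ is $\Q$-linearly independent exactly when $\{\xi_h\}$ is multiplicatively independent, i.e., by Proposition \ref{prop3}, exactly in case (c). The leftover case $q=p^n$ is not covered by Proposition \ref{prop3}: there $\Phi_q(1)=p$, so $\prod_{(k,q)=1}2\sin(k\pi/q)=p$ imposes no relation among the admissible logarithmic sines themselves, and their $\Q$-independence together with $\log 2$ follows from the classical non-vanishing of the cyclotomic-unit regulator of $\Q(\zeta_{p^n})$ (Kummer), again via the multiplicative/linear translation; this gives case (a). The converse (if $q$ is none of (a)--(d) then $S_q$ is $\Q$-linearly dependent) is the contrapositive of the same equivalences, the $\xi_h$ being then multiplicatively dependent; the $q=155$ discussion in the Remark above is a concrete instance.

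I expect the principal obstacle to lie in the bookkeeping of the main step: faithfully matching the index set $\{2\le h<q/2\}$ of Proposition \ref{prop3} with the admissible set after deleting the $2^{\Q}$-indices and adjoining $\log 2$, so that no relation is created or lost, and pinning down the precise relationship between multiplicative independence of the cyclotomic units $\xi_h$ and $\Q$-linear independence of the single-place numbers $\log|\xi_h|$. A secondary point is to verify that the $\zeta_{2m}$-substitution in the second step is a genuine change of basis on the relevant $\Q$-span, including its behaviour on $\log 2$ and on the inadmissible indices.
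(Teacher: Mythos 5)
You should first note that this paper never proves Proposition \ref{prop4}: it is quoted as a preliminary with the pointer ``for a proof see \cite{CD2}'', so there is no in-text argument to compare against; your overall architecture (Baker's theorem via Proposition \ref{prop1} to pass from $\overline{\Q}$-independence to $\Q$-independence of real logarithms, Pei--Feng's Proposition \ref{prop3} for composite $q\not\equiv 2\pmod 4$, classical independence of cyclotomic units for prime powers, and a reduction of $q\equiv 2\pmod 4$ to odd level) is certainly the intended kind of route. The genuine gap is your treatment of $q=2m$, $m$ odd. The identity $1-\zeta_{2m}^{a}=(1-\zeta_m^{a})/(1-\zeta_m^{(a+m)/2})$ only writes each level-$2m$ number as a \emph{difference} $u_{a'}-u_{c(a)'}$ of level-$m$ log-sines, and the claim that ``the $\Q$-span of the admissible logarithmic sines at level $2m$ together with $\log 2$ equals the one at level $m$'' is false as stated: for $m=p^{n}$ the number $u_b=\log\bigl(2\sin(b\pi/m)\bigr)$ cannot lie in the span of such differences and $\log 2$ (clear denominators and compare valuations at the prime above $p$: each $4\sin^2(b\pi/m)$ has positive valuation there, every difference has total weight zero, and $2$ is a unit at that prime), and the cardinalities do not even match ($\phi(m)/2-1$ numbers at level $2m$ versus $\phi(m)/2$ at level $m$). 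The fact that $a\mapsto (a+m)\overline{2}\bmod m$ permutes the units does not transfer independence or dependence: for cases (b) and (d) you must show that the specific coefficient vectors $e_{a'}-e_{c(a)'}$ are linearly independent (equivalently that the induced pairing graph on the residues in $(0,m/2)$ is a tree), or argue directly at level $2m$ that the numbers $1-\zeta_{2m}^{a}$, $1<a<m$, are multiplicatively independent units there (note $\Phi_{2m}(1)=1$, so this is a cyclotomic-unit statement at level $2m$ in its own right, not a formal consequence of the level-$m$ one); and for the ``only if'' direction with $m$ odd composite failing (III)/(V), a Pei--Feng relation among the level-$m$ units does not automatically yield a relation among your $\phi(m)/2-1$ differences, so dependence at level $2m$ also needs its own construction.

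Two further points that you explicitly defer are not cosmetic and must be closed. First, Proposition \ref{prop3} is about multiplicative independence of the units $\xi_h=(1-\zeta_q^{h})/(1-\zeta_q)$ themselves, while your argument uses $\Q$-linear independence of the one-place quantities $\log|\xi_h|$; in general a unit of absolute value one at a single archimedean place need not be a root of unity, so the two are not formally equivalent. Here the equivalence does hold, but only after observing $1-\zeta_q^{h}=-2i\,\zeta_q^{h/2}\sin(h\pi/q)$, so that $\xi_h$ is a root of unity times the positive real $|\xi_h|$ and any one-place relation forces $\prod_h\xi_h^{n_h}$ to be a root of unity; this step has to be written down. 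Second, the admissibility bookkeeping with $\log 2$ cannot be left as an ``unless every admissible sine equals $\tfrac12$'' aside: since the comparison with Proposition \ref{prop3} runs over all $2\le h<q/2$, you need to determine exactly when $2\sin(a\pi/q)\in 2^{\Q}$ for $(a,q)=1$, $1<a<q/2$ (a norm computation on $(1-\zeta_q^{a})(1-\zeta_q^{-a})$ shows this essentially never occurs in that range), so that deleting inadmissible indices and adjoining $\log 2$ neither creates nor destroys relations. As it stands your text is a plausible program whose two hardest points --- the $2m$ reduction and the place-versus-unit translation --- are precisely the ones left unproved.
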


Another result that we require to prove the non-vanishing of derivatives of $L$-functions for the case of primes and their powers is the following \cite{CD2}:
\begin{proposition}\label{prop5} Let $q \geq 2$ be an integer. Then the numbers
$$ \left \{ \log \left( 2 \sin \frac{a\pi}{q} \right) : 1 \leq a < \frac{q}{2} , ~(a,q)=1,\ \ a/q \neq  \frac{1}{\pi}\left( \sin^{-1} \frac{1}{2^\alpha} \right),
 \alpha \in \Q   \right \},   \ \pi  \text{ \ \ and  \ } \log 2,  $$ are linearly independent 
over the field of algebraic numbers if and only if $q$ is a prime power or $q=6.$
\end{proposition}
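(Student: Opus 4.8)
The plan is to reduce the statement to a question about multiplicative independence of algebraic numbers and then apply Baker's theorem (Proposition~\ref{prop1}). Since $2\sin(a\pi/q)=|1-\zeta_q^{a}|$ is a real algebraic number, $2$ is algebraic, and $\pi=-i\log(-1)$, every element of the displayed set together with $\pi$ and $\log 2$ is a value of the logarithm at an algebraic point. Hence, by Proposition~\ref{prop1}, the listed numbers are $\overline{\Q}$-linearly independent if and only if they are $\Q$-linearly independent; and in any $\Q$-relation the coefficient of $\pi=-i\log(-1)$ must vanish by comparing imaginary parts. So the assertion is equivalent to the multiplicative independence of the positive real algebraic numbers $\{2\sin(a\pi/q)\}\cup\{2\}$, i.e.\ to the nonexistence of a relation $\prod_{a}(2\sin(a\pi/q))^{n_a}\,2^{m}=1$ with integers $n_a,m$ not all zero, the product being over the (non-special) $a$ with $1\le a<q/2$, $(a,q)=1$. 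A preliminary observation I would record is that $2\sin(a\pi/q)$ equals $2^{\alpha}$ for some $\alpha\in\Q$ only when $(a,q)\in\{(1,4),(1,6)\}$: taking at most the second power and using Niven's theorem pins $\cos(2\pi a/q)$ to $\{0,\pm\tfrac12,\pm1\}$, while $2^{c/d}$ with $d\ge 3$ cannot lie in a cyclotomic field. Thus ``special'' $a$ matter only for $q=4$ and $q=6$.

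Using this, the small cases are immediate: for $q\in\{2,4,6\}$ the displayed set is empty (for $q=2$ there is no admissible $a$; for $q=4,6$ the only candidate $a=1$ is special), so the claim collapses to the $\overline{\Q}$-independence of $\pi$ and $\log 2$, which follows from Proposition~\ref{prop1} because $\log 2$ and $\log(-1)=i\pi$ are $\Q$-independent. For the converse I would exhibit an explicit relation: recall that $\Phi_q(1)=1$ when $q$ has at least two distinct prime divisors and $\Phi_q(1)=2$ when $q=2^{n}$, so that $\prod_{(a,q)=1,\,1\le a<q/2}(2\sin(a\pi/q))^{2}=|\Phi_q(1)|$ (equivalently Eq.~\eqref{8eq11} in the first case). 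Hence $\sum_{1\le a<q/2}\log(2\sin(a\pi/q))$ equals $0$ when $q$ has two distinct prime factors and equals $\tfrac12\log 2$ when $q=2^{n}$. By the preceding paragraph, as soon as $q\notin\{4,6\}$ every $a$ in this sum is non-special and the set is non-empty, so each of these identities is a non-trivial $\overline{\Q}$-linear relation among the numbers in the statement (together with $\log 2$ in the second case); this rules out independence whenever $q$ is not a prime power and $q\neq 6$, and also (owing to the $\log 2$ term) when $q=2^{n}$ with $n\ge 3$.

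It remains to prove independence when $q=p^{n}$ with $p$ an odd prime; this is the substantive case, and the place where including the term $a=1$, namely $\log(2\sin(\pi/q))$, makes the present statement genuinely stronger than the corresponding one without that term. Suppose $\prod_{1\le a<q/2}(2\sin(a\pi/q))^{n_a}\,2^{m}=1$. Squaring and using $(2\sin(a\pi/q))^{2}=(1-\zeta_q^{a})(1-\zeta_q^{-a})$ converts this into an identity in $K^{*}$, where $K=\Q(\zeta_q)$. In $K$ the prime $p$ is totally ramified with $(1-\zeta_q^{a})=\mathfrak p$ for every $(a,q)=1$, and $\mathfrak p$ is prime to $(2)$ since $p$ is odd; comparing in the ideal generated by the left-hand side the exponent of $\mathfrak p$ with those of the primes above $2$ forces $m=0$ and $\sum_{a}n_a=0$. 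Writing $1-\zeta_q^{a}=(1-\zeta_q)\,\eta_a$ with $\eta_a=(1-\zeta_q^{a})/(1-\zeta_q)$ the standard cyclotomic unit ($\eta_1=1$), and using $\sum n_a=0$ together with $\eta_{-a}=-\zeta_q^{-a}\eta_a$, the identity collapses to $\prod_{1<a<q/2}\eta_a^{2n_a}=\omega$ for some root of unity $\omega$; taking $\log|\cdot|$ yields $\sum_{1<a<q/2}n_a\log|\eta_a|=0$. Since the cyclotomic units $\eta_a$ with $1<a<q/2$, $(a,q)=1$, of $\Q(\zeta_{p^{n}})$ are multiplicatively independent — their number $\phi(p^{n})/2-1$ equals the unit rank of $\Q(\zeta_{p^{n}})$, a classical fact — we get $n_a=0$ for $a>1$, hence $n_1=0$ from $\sum n_a=0$, and $m=0$. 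So no non-trivial relation exists, which by the reduction above proves the asserted $\overline{\Q}$-linear independence.

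The main obstacle is precisely this last case: one must show that adjoining $\log(2\sin(\pi/q))$ to the independent set does not create a dependence, and the only leverage I see is the ideal-theoretic computation in $\Q(\zeta_q)$ (total ramification of the odd prime $p$, coprimality of $2$ to it) that strips away the $a=1$ term and the $\log 2$ term and reduces everything to the known independence of cyclotomic units. A secondary point requiring care is the uniform handling of the ``special'' arguments, but as noted this only affects $q=4,6$ and is a short computation.
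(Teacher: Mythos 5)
The paper itself contains no proof of Proposition \ref{prop5}: it is quoted from \cite{CD2}, so there is no internal argument to measure you against. Taken on its own terms, most of your machinery is sound and is surely close to the intended one: the passage via Proposition \ref{prop1} from $\overline{\Q}$-linear relations to integer multiplicative relations (with the coefficient of $\pi$ killed by taking imaginary parts), the Niven-theorem classification showing the ``special'' arguments occur only for $(a,q)=(1,4),(1,6)$, the dependence for $q$ with two distinct prime factors coming from $\Phi_q(1)=1$ (Eq.~\eqref{8eq11}), and, for odd prime powers, the use of total ramification of $p$ in $\Q(\zeta_q)$ to force $m=0$ and $\sum_a n_a=0$, followed by the classical multiplicative independence of the cyclotomic units $\eta_a=(1-\zeta_q^a)/(1-\zeta_q)$.

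The genuine problem is that what you prove is not the stated proposition. The statement asserts independence for \emph{every} prime power $q$, while your own computation with $\Phi_{2^n}(1)=2$ gives, for $q=2^n$ with $n\ge 3$, the non-trivial rational relation $\sum_{1\le a<q/2,(a,q)=1}\log\bigl(2\sin(a\pi/q)\bigr)=\tfrac12\log 2$; e.g.\ for $q=8$ neither $a=1$ nor $a=3$ is special and $2\sin(\pi/8)\cdot 2\sin(3\pi/8)=\sqrt2$, so $\log(2\sin(\pi/8))+\log(2\sin(3\pi/8))-\tfrac12\log 2=0$ is a dependence among the listed numbers (which include $\log 2$). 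You record this dependence and then silently proceed as if only odd prime powers remained, never reconciling the contradiction: as written you cannot establish the ``if'' direction for even prime powers $q\ge 8$, and indeed your relation refutes it. You must either flag explicitly that you are proving a corrected statement (independence iff $q$ is an odd prime power or $q\in\{2,4,6\}$) and that Proposition \ref{prop5} as printed needs amending in the $2$-power case --- note the application in Theorem \ref{THM2} is unaffected, since $L'(0,f)$ involves only the $\log(2\sin(a\pi/q))$ and not $\log 2$ --- or supply an argument for $q=2^n$, which does not exist. A minor further point: the independence of the $\eta_a$, $1<a<q/2$, for prime power $q$ is classical (they generate a finite-index subgroup of the units), but it does not follow merely from their number equalling the unit rank $\phi(q)/2-1$, which is the justification you parenthetically offer; invoke the independence result itself.
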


\section{Proof of the main theorems}

 \subsection{Proof of Theorem \ref{THM1} }
  \begin{proof}
  
  Observe that by using Eq. \eqref{8eq7}, for any Dirichlet-type periodic arithmetic function $f,$ we have
 \begin{align}\label{8eq17}
 L'(0,f)=-\log q \sum\limits_{\substack{a=1 \\ (a,q)=1}}^{q} f(a) (1/2-a/q)+
   \sum\limits_{\substack{a=1 \\ (a,q)=1}}^{q} f(a)  \log \left( \Gamma (a/q)\right)- \frac{1}{2} \log (2 \pi) \sum\limits_{\substack{a=1 \\ (a,q)=1}}^{q} f(a).
   \end{align}
  Since $f$ is an even Dirichlet-type periodic function, therefore Eq. \eqref{8eq17} can be re-written as 
  $$L'(0,f)= \sum\limits_{\substack{a=1 \\ (a,q)=1}}^{[q/2]} f(a)\big(\log \left(\Gamma (a/q)\right) 
  + \log \left(\Gamma(1-a/q)\right) \big)-  \log (2 \pi)\sum\limits_{\substack{a=1 \\ (a,q)=1}}^{[q/2]}f(a) .$$
  Now by using the reflection formula $\Gamma(z)\Gamma(1-z)=\pi/ \sin(\pi z)$ for gamma function, the above equation reduces to
   \begin{align}\label{8eq18}L'(0,f)= \sum\limits_{\substack{a=1 \\ (a,q)=1}}^{[q/2]}f(a)\left( \log \pi-\log \sin \frac{a\pi}{q} \right)-
    \log (2 \pi) \sum\limits_{\substack{a=1 \\ (a,q)=1}}^{[q/2]} f(a).
    \end{align}
 Also by  performing some simple calculations and re writing Eq. \eqref{8eq18}, we get
   \begin{align}\label{8eq19}
   L'(0,f)= -\sum\limits_{\substack{a=1 \\ (a,q)=1}}^{[q/2]}f(a) \left(\log 2\sin \frac{\pi a}{q}  \right).
   \end{align}

   Since $q$  is a product of at least two prime factors, then the sine function satisfies the 
  following identity (for a proof see Lemma 3.3 of \cite{CD1}):
   \begin{equation}\label{8eq20}
2^{\phi(q)}\prod_{\substack{k=1,\\ (k, q )=1  }}^{q-1}  \sin\left( \frac{ k\pi}{q}\right)=1
\end{equation}
    By using the above sine identity, Eq. \eqref{8eq19} can be written as
 \begin{align}\label{8eq21}
  L'(0,f)= \sum\limits_{\substack{a=2 \\ (a,q)=1}}^{q/2}\bigg( f(a)-f(1) \bigg) \left(\log 2\sin \left(\frac{a\pi }{q}\right)  \right).
  \end{align}
   
   Since $q$ satisfies one of the conditions given in Proposition \ref{prop3}, therefore by using Proposition 
\ref{prop4} the numbers $\log 2 \sin (a\pi/q)$ where $(a,q)=1$ with $2 \leq a \leq q/2$ are 
linearly independent over the field of algebraic numbers. Thus, $L'(0,f)=0$ if and only if $f(a)=f(1)$ for 
all $a$ and hence $f$ is a constant function. This completes the proof.

  \end{proof}

\subsection{Proof of Theorem \ref{THM2} }
\begin{proof}
Suppose $f$ is an even Dirichlet type algebraically valued function, then by using Theorem \ref{THM1}, we have
\begin{align}\label{8EQ19}
   L'(0,f)= -\sum\limits_{\substack{a=1 \\ (a,q)=1}}^{[q/2]}f(a) \left(\log 2\sin \frac{\pi a}{q}  \right).
   \end{align}
   Since $q$ is a prime power,  therefore by using Proposition \ref{prop5}, 
    the numbers $\log(2 \sin (\pi a/q))$ where $ 1 \leq a \leq q/2$ with $(a,q)=1$
   are linearly independent over $\overline{\Q}.$ Hence by using Eq. \eqref{8EQ19}, $L'(0,f)=0$ if and only if $f$ is  an identically zero function. This completes the proof.
   
\end{proof}
Observe that the above discussion leads us to the following remark about the non-vanishing of the special values of the derivatives of $L$-functions for the case when $q$ does not satisfy any of the conditions in Proposition \ref{prop3}
.\begin{remark}
For each $q$ which does not satisfy any of the conditions given in Proposition \ref{prop3}, there exists infinitely many even Dirichlet type non-constant functions such that $L'(0,f)$ is zero.
\end{remark}

\section{Consequences of weak Schanuel's conjecture}
In this section, we discuss  the algebraic independence of the special values of derivatives of $L$-functions under the assumption of the weak Schanuel's conjecture. We begin with the following result for the linear independence of the special values of derivatives of $L$-functions. Here is the statement of the theorem:
\begin{theorem}
Let $q>1$ be any prime power and $f_1, \cdots, f_n$ be even Dirichlet type functions with period $q$. Then $L'(0,f_1), \cdots, L'(0,f_n)$ are linearly independent over the field of algebraic numbers if and only if $f_1, \cdots, f_n$ are linearly independent functions over the field of algebraic numbers.
\end{theorem}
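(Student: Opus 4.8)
The plan is to reduce the statement to the injectivity of a single linear map and then finish with elementary linear algebra. Let $V$ be the $\overline{\Q}$-vector space of $\overline{\Q}$-valued even Dirichlet type functions of period $q$; such an $f$ is determined by its values $f(a)$ for $(a,q)=1$, $1\le a\le q/2$, and by Eq. \eqref{8eq19} the assignment
\[
\Lambda\colon V\longrightarrow\C,\qquad \Lambda(f)=L'(0,f)=-\sum_{\substack{a=1\\(a,q)=1}}^{[q/2]}f(a)\,\log\!\left(2\sin\frac{\pi a}{q}\right),
\]
is $\overline{\Q}$-linear. Writing $\lambda_a=\log(2\sin(\pi a/q))$, the map $\Lambda$ sends the natural basis of $V$ (the even indicator functions of the pairs $\{a,q-a\}$) to the family $(-\lambda_a)$; since $q$ is a prime power these $\lambda_a$ are linearly independent over $\overline{\Q}$ by Proposition \ref{prop5}, so $\Lambda$ is injective. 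Equivalently, $L'(0,g)=0$ forces $g\equiv 0$, which is exactly the content of Theorem \ref{THM2}. That injectivity is the one substantive input needed.

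Granting it, each direction is one line. For the ``if'' direction, suppose $f_1,\dots,f_n$ are linearly independent over $\overline{\Q}$ and $\sum_{i=1}^n c_iL'(0,f_i)=0$ with $c_i\in\overline{\Q}$. By linearity of $\Lambda$ this says $\Lambda\bigl(\sum_i c_if_i\bigr)=0$, so $\sum_i c_if_i\equiv 0$ by injectivity, and linear independence of the $f_i$ gives $c_1=\dots=c_n=0$; hence $L'(0,f_1),\dots,L'(0,f_n)$ are linearly independent over $\overline{\Q}$. For the ``only if'' direction, suppose $L'(0,f_1),\dots,L'(0,f_n)$ are linearly independent over $\overline{\Q}$ and $\sum_i c_if_i\equiv 0$ with $c_i\in\overline{\Q}$; applying $\Lambda$ gives $\sum_i c_iL'(0,f_i)=\Lambda(0)=0$, whence all $c_i=0$, so $f_1,\dots,f_n$ are linearly independent.

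The main obstacle is not located in the argument above — that is just the fact that an injective linear map both preserves and reflects linear independence — but is entirely absorbed into Theorem \ref{THM2}, and hence ultimately into Proposition \ref{prop5} and Baker's theorem (Proposition \ref{prop1}) on linear forms in logarithms. The only delicate point, already implicit in the proof of Theorem \ref{THM2}, is the handling of the finitely many small prime powers for which an admissible argument $a/q$ coincides with one of the exceptional values $\tfrac1\pi\sin^{-1}(2^{-\alpha})$, $\alpha\in\Q$, barred from Proposition \ref{prop5} — for instance $q=4$, where $2\sin(\pi/4)=2^{1/2}$; in such a case the corresponding term of $\Lambda(f)$ is an algebraic multiple of $\log 2\neq 0$, so $\Lambda$ remains injective and the proof goes through unchanged.
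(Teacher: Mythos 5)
Your proposal is correct and follows essentially the same route as the paper: both express $L'(0,f)$ via Eq.\ \eqref{8eq19} as a $\overline{\Q}$-linear combination of the numbers $\log\left(2\sin\frac{a\pi}{q}\right)$ and then invoke Proposition \ref{prop5} for prime power $q$, your injectivity-of-$\Lambda$ packaging being just a reformulation of the paper's computation with $\sum_i c_i L'(0,f_i)=0$. The only differences are cosmetic improvements: you spell out the trivial ``only if'' direction, which the paper leaves implicit, and you explicitly dispose of the exceptional arguments $a/q=\frac{1}{\pi}\sin^{-1}\left(2^{-\alpha}\right)$ excluded in Proposition \ref{prop5}, a point the paper passes over silently.
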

\begin{proof}
If possible, let there exists algebraic numbers $c_i^{~,}s$ not all zero such that
\begin{align*}
\sum\limits_{\substack{i=1}}^{n}c_iL'(0,f_i)=0. 
\end{align*}
Substituting the value of $L'(0,f_i)$ in the above equation, we have
\begin{align*}
\sum\limits_{\substack{i=1}}^{n}c_i\left(\sum\limits_{\substack{a=1 \\ (a,q)=1}}^{[q/2]}f_i(a)\log 2 \sin \left(\frac{a\pi}{q}\right)\right)=0
\end{align*}
and re-writing above equation we get
\begin{align*}
\sum\limits_{\substack{a=1 \\ (a,q)=1}}^{[q/2]}\left(\sum_{i=1}^nc_if_i(a)\right)\log 2 \sin \left(\frac{a\pi}{q}\right)=0
\end{align*}
and thus by using Proposition \ref{prop5}, we have
\begin{align*}
\sum_{i=1}^n c_if_i(a)=0
\end{align*}
for all $a$. This completes the proof.

\end{proof}

Now in our next theorem, we consider the algebraic independence of the special values of the derivatives of the $L$-functions under the assumption of weak Schanuel's conjecture:
\begin{conjecture}
Let $\alpha_1, \cdots, \alpha_n$ be non-zero algebraic numbers such that the numbers $\log \alpha_1, \cdots, \log \alpha_n$ are $\Q$-linearly independent. Then $\log \alpha_1, \cdots, \log \alpha_n$ are algebraically independent.
\end{conjecture}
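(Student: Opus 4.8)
The plan is to show that the field $\overline{\Q}(\log\alpha_1,\ldots,\log\alpha_n)$ has transcendence degree exactly $n$ over $\overline{\Q}$; since these $n$ numbers generate it, this is equivalent to the asserted algebraic independence. The natural point of departure is the linear theory already in hand: by Proposition \ref{prop1} (Baker's theorem), the $\Q$-linear independence of $\log\alpha_1,\ldots,\log\alpha_n$ upgrades to the $\overline{\Q}$-linear independence of $1,\log\alpha_1,\ldots,\log\alpha_n$. For $n=1$ this already settles the claim, since the $\Q$-linear independence of a single $\log\alpha_1$ just says $\alpha_1\neq 1$, and then $\log\alpha_1$ is transcendental by Hermite--Lindemann (if $\beta=\log\alpha_1$ were a nonzero algebraic number, $e^{\beta}=\alpha_1$ would be transcendental). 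The real content is the passage from linear to algebraic independence for $n\ge 2$.

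First I would set up the standard algebraic-independence machinery of transcendental number theory. One constructs an auxiliary polynomial $P$ in $n$ variables with rational-integer coefficients, of controlled degree and height, vanishing to high order along the analytic one-parameter subgroups of $(\C^{\times})^{n}$ attached to the exponential parametrisation $t\mapsto(\alpha_1^{t},\ldots,\alpha_n^{t})$. One then plays a zero estimate of Masser--W\"ustholz type (bounding the order of vanishing of $P$ along such a subgroup) against a nonvanishing/small-value estimate, and feeds the outcome into an algebraic-independence criterion such as Philippon's. If the criterion could be made to exclude every nontrivial polynomial relation among $\log\alpha_1,\ldots,\log\alpha_n$, then $\operatorname{trdeg}_{\overline{\Q}}\overline{\Q}(\log\alpha_1,\ldots,\log\alpha_n)=n$ would follow and the conjecture would be proved.

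The hard part is precisely this last step, and it is where every known method falls short. Baker's method is fundamentally a \emph{linear} method: it controls linear forms in logarithms and hence yields $\overline{\Q}$-linear independence, but it cannot detect or rule out polynomial relations of degree $\ge 2$. Genuine algebraic-independence techniques for logarithms of algebraic numbers are drastically weaker than the conjecture demands: the best unconditional lower bounds (Nesterenko, Philippon, Roy) give a transcendence degree growing only like a fractional power of $n$, far below the required value $n$, and even the case $n=2$ is open, since it already contains the notorious unsolved question of whether $\log 2$ and $\log 3$ are algebraically independent. For this reason I do not expect to establish the statement outright; it is a known weakening of Schanuel's conjecture (apply Schanuel to the $\Q$-linearly independent numbers $\log\alpha_1,\ldots,\log\alpha_n$ and use that each $e^{\log\alpha_i}=\alpha_i$ is algebraic), and the realistic deliverable along these lines is only a partial transcendence-degree bound. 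Accordingly the clean assertion above is retained as a \emph{conjecture}, to be invoked as a hypothesis rather than proved, in deducing the algebraic independence of the special values $L'(0,f_i)$.
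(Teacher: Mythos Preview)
Your assessment is correct: the statement is the weak Schanuel conjecture, and the paper does not prove it at all. It is stated as a \texttt{conjecture} environment with no accompanying proof, and is then used purely as a hypothesis in the subsequent theorem on the algebraic independence of the values $L'(0,f_i)$. Your final paragraph, concluding that the assertion should be retained as a conjecture and invoked as an assumption rather than proved, exactly matches the paper's treatment; the preceding discussion of auxiliary polynomials, zero estimates, and Philippon's criterion is informative context but goes well beyond anything the paper attempts.
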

Under the assumption of the above conjecture, we prove the following theorem:

\begin{theorem}
Let $q>1$ be any prime power and $f_1, \cdots, f_n$ be even Dirichlet type functions with period $q$. Suppose $f_1, \cdots, f_n$ are algebraically independent functions, then weak Schanuel's conjecture implies $L'(0,f_1), \cdots, L'(0,f_n)$ are algebraically independent. 
\end{theorem}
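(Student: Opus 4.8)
The plan is to express each $L'(0,f_i)$ as a homogeneous $\overline{\Q}$-linear form in a fixed finite family $B$ of logarithms of algebraic numbers which weak Schanuel's conjecture renders algebraically independent, and then to invoke the elementary fact that linear forms in algebraically independent quantities are algebraically independent precisely when they are linearly independent. Concretely, split the index range of Eq.~\eqref{8eq9} as $T_1\sqcup T_2$, where $T_1=\{a:(a,q)=1,\ 1\le a<q/2,\ a/q\neq\tfrac1\pi\sin^{-1}(2^{-\alpha})\ \text{for all }\alpha\in\Q\}$ and $T_2$ is the complementary (finite, indeed at most one-element) set; for $a\in T_2$ one has $2\sin(a\pi/q)=2^{\beta_a}$ with $\beta_a\in\Q$, so $\log(2\sin(a\pi/q))=\beta_a\log 2$. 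Then
\[
L'(0,f_i)\;=\;-\sum_{a\in T_1}f_i(a)\,\log\!\Bigl(2\sin\tfrac{a\pi}{q}\Bigr)\;-\;\Bigl(\sum_{a\in T_2}\beta_a f_i(a)\Bigr)\log 2 ,
\]
which displays $L'(0,f_i)$ inside the $\overline{\Q}$-span of $B=\{\log(2\sin(a\pi/q)):a\in T_1\}\cup\{\log 2\}$, with coordinate vector $v_i=\bigl((-f_i(a))_{a\in T_1},\,-\sum_{a\in T_2}\beta_a f_i(a)\bigr)$. Each element of $B$ is the logarithm of a positive real algebraic number, since $2\sin(a\pi/q)=|1-\zeta_q^a|$ and $|1-\zeta_q^a|^2=(1-\zeta_q^a)(1-\zeta_q^{-a})\in\overline{\Q}$.

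Next I would make $B$ algebraically independent. As $q$ is a prime power, Proposition~\ref{prop5} shows that $B\cup\{\pi\}$ is linearly independent over $\overline{\Q}$, so in particular $B$ is $\Q$-linearly independent; weak Schanuel's conjecture then gives that the elements of $B$ are algebraically independent over $\Q$, and hence, being complex numbers, over $\overline{\Q}$ as well. Thus $\overline{\Q}[B]$ is a polynomial ring over $\overline{\Q}$ in which the $L'(0,f_i)$ sit as homogeneous linear forms. If $v_1,\dots,v_n$ are $\overline{\Q}$-linearly independent, extend them to a basis of $\overline{\Q}^{|B|}$; the resulting $|B|$ linear forms are obtained from the elements of $B$ by an invertible $\overline{\Q}$-linear substitution, so they form another system of polynomial generators of $\overline{\Q}[B]$ and are in particular algebraically independent over $\overline{\Q}$, and hence so is the subfamily $L'(0,f_1),\dots,L'(0,f_n)$.

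It remains to produce this linear independence from the hypothesis. Since $B$ is $\overline{\Q}$-linearly independent it is a basis of its $\overline{\Q}$-span, so the coordinate vector $v(f)=\bigl((-f(a))_{a\in T_1},\,-\sum_{a\in T_2}\beta_a f(a)\bigr)$ of $L'(0,f)$ is well defined and $f\mapsto v(f)$ is $\overline{\Q}$-linear on the space of even Dirichlet-type periodic functions mod $q$. By Theorem~\ref{THM2} this map has trivial kernel, hence sends $\overline{\Q}$-linearly independent tuples to $\overline{\Q}$-linearly independent tuples. As the $f_i$ are algebraically independent and therefore in particular $\overline{\Q}$-linearly independent, the vectors $v_1=v(f_1),\dots,v_n=v(f_n)$ are $\overline{\Q}$-linearly independent, and the previous paragraph completes the argument.

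The point I expect to require the most care is the bookkeeping for the exceptional set $T_2$: one must check that those indices contribute only elements of $\overline{\Q}\log 2$, which lies in the $\overline{\Q}$-span of $B$ (so that no relation among the coordinates of the $L'(0,f_i)$ is concealed there), and that Proposition~\ref{prop5} is invoked for exactly the surviving family $B$ rather than for the naive family indexed by all $a$ with $(a,q)=1$ and $1\le a<q/2$. Everything else reduces to the transfer principle for linear forms in algebraically independent quantities and to the injectivity furnished by Theorem~\ref{THM2}, both of which are routine.
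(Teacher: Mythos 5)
Your proposal is correct and follows essentially the same route as the paper: write each $L'(0,f_i)$ as a $\overline{\Q}$-linear form in the logarithms $\log\left(2\sin\left(a\pi/q\right)\right)$, invoke Proposition \ref{prop5} together with weak Schanuel to make those logarithms algebraically independent, and then transfer algebraic independence through the linear substitution determined by the values $f_i(a)$. The only difference is that you treat explicitly two points the paper's proof glosses over, namely the exceptional indices where $2\sin(a\pi/q)$ is a rational power of $2$ (your set $T_2$, folded into $\log 2$) and the precise deduction that a vanishing composed polynomial forces a linear, hence algebraic, relation among the $f_i$, which you supply via the injectivity coming from Theorem \ref{THM2} and the standard fact that linear forms in algebraically independent quantities are algebraically independent exactly when their coefficient vectors are linearly independent.
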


\begin{proof}
Suppose there exists a  polynomial $P(x_1, \cdots, x_n)$ not identically zero such that
\begin{align*}
P(L'(0,f_1), \cdots, L'(0,f_n))=0.
\end{align*}
Substituting the value of $L'(0,f_i)$ in the above equation, we get an equation of the form
\begin{align*}
Q(\log 2 \sin \pi /q, \cdots, \log 2 \sin a \pi/q )=0
\end{align*}
for some polynomial $Q(x_1,\cdots, x_n)$ where $2 \leq a \leq q/2$ with $(a,q)=1.$ Also observe that the numbers $\log 2 \sin a \pi/q$ are linearly independent by using Proposition \ref{prop5}, then by using weak Schanuel's conjecture we must have $Q(x_1, \cdots, x_n) \equiv 0$. Thus by using,
\begin{align*}
L'(0,f)= -\sum\limits_{\substack{a=1 \\ (a,q)=1}}^{[q/2]}f(a) \left(\log 2\sin \frac{\pi a}{q}  \right).
\end{align*}
and the fact that $Q(x_1, \cdots, x_n)\equiv 0$ gives us a non-trivial algebraic relation between the functions $f_1, \cdots, f_n$ which is a contradiction.
\end{proof}

\section{Declarations}

\noindent
{\bf Competing Interests:} The authors have no competing interests to declare that are relevant to the content of this article.\\

\noindent
{\bf Data availability:} Data sharing is not applicable to this article as no datasets were generated
or analyzed during the current study.

\end{document}